\newcounter{minutes}
\newcounter{hours}
\dedicatory{}
\theoremstyle{plain}
\newtheorem{thm}[equation]{Theorem}
\newtheorem{cor}[equation]{Corollary}
\newtheorem{lem}[equation]{Lemma}
\theoremstyle{definition}
\theoremstyle{remark}
\newtheorem{nonsec}[equation]{}
\numberwithin{equation}{section}
\newcommand{\beq}{\begin{equation}}
\newcommand{\eeq}{\end{equation}}
\newcommand{\ben}{\begin{enumerate}}
\newcommand{\een}{\end{enumerate}}
\newcommand{\bequu}{\begin{eqnarray*}}
\newcommand{\eequu}{\end{eqnarray*}}
\newcommand{\bequ}{\begin{eqnarray}}
\newcommand{\eequ}{\end{eqnarray}}
\newcommand{\B}{\mathbb{B}^2}
\newcommand{\sh}{\,\textnormal{sh}}
\renewcommand{\th}{\,\textnormal{th}}
\font\fFt=eusm10 
\font\fFa=eusm7  
\font\fFp=eusm5  
\def\K{\mathchoice
{\hbox{\,\fFt K}}
{\hbox{\,\fFt K}}
{\hbox{\,\fFa K}}
{\hbox{\,\fFp K}}}
\begin{document}
\thispagestyle{empty}
\def\thefootnote{}

\title[Formulas for the visual angle metric]{Formulas for the visual angle metric}
\author[M. Fujimura]{Masayo Fujimura}
\address{M. Fujimura: Department of Mathematics, National Defense Academy of Japan, Japan}
\email{masayo@nda.ac.jp
ORCID ID:  \url{http://orcid.org/0000-0002-5837-8167
}}
\author[R. Kargar]{Rahim Kargar}
\address{R. Kargar: Department of Mathematics and Statistics, University of Turku, FI-20014 Turku, Finland}
\email{rakarg@utu.fi
 ORCID ID:  \url{http://orcid.org/0000-0003-1029-5386}}

\author[M. Vuorinen]{Matti Vuorinen}
\address{M. Vuorinen: Department of Mathematics and Statistics, University of Turku, FI-20014 Turku, Finland}
\email{vuorinen@utu.fi
 ORCID ID:  \url{http://orcid.org/0000-0002-1734-8228}}

\date{}

\begin{abstract}
We  prove several new formulas for the visual angle metric of the unit
disk in terms of the hyperbolic metric and apply these to prove a sharp Schwarz
lemma for the visual angle metric under quasiregular mappings.

\end{abstract}

\keywords{ hyperbolic metric, visual angle metric, conformal mapping, quasiconformal mapping}
\subjclass[2020]{30C62, 51M09, 51M15}

\maketitle

\footnotetext{\texttt{{\tiny File:~\jobname .tex, printed: \number\year-%
\number\month-\number\day, \thehours.\ifnum\theminutes<10{0}\fi\theminutes}}}
\makeatletter

\makeatother

\section{Introduction}

During the past few decades, various intrinsic metrics of planar domains
have become
important tools in geometric function theory, for instance in the study
of quasiconformal
mappings \cite{gh, hkv,h}.  These metrics, defined  in a general domain, on the one hand, share some of the properties
of the hyperbolic metric of the unit disk and on the other hand, they are
simpler  than
the hyperbolic metric. Intrinsic metrics are usually not conformally
invariant, but have some kind of quasi-invariance properties under
subclasses of conformal
maps, e.g. under translations or M\"obius transformations.

We study here one such intrinsic metric, the visual angle metric,
introduced in \cite{klvw} and further studied in \cite{hkv,hvw, wv}.
Let $G$ be a proper subdomain of $\mathbb{R}^n$ such that $\partial G$
is not a proper subset of a line. The \textit{visual angle metric}
for $a,b \in G$ is defined by
\begin{equation}\label{vamDef}
  v_G(a,b) = \sup \{ \alpha : \alpha=\measuredangle (a,z,b),
  z\in \partial{G} \}.
\end{equation}

Finding the concrete values of $v_G$ leads to minimization algorithms even
in the simplest case when $G$ is the unit disk $\mathbb{B}^2$  because no
formulas are known.  Our main
results are the following three theorems, which give explicit formulas for
$v_G$ when $G=\mathbb{B}^2.$
Let the line through $a,b \in \mathbb{C}$ be denoted by $L[a,b]$. Let $S(a,r)=\{b\in\mathbb{R}^n:|a-b|=r\}$ be the circle centered at $a\in\mathbb{R}^n$ with radius $r>0$. The unit circle is defined by $S(0,1)$.

The first theorem provides a geometric construction for the extremal point $z$.

\begin{thm} \label{main1}
Let  $a,b \in\mathbb{B}^2$ with $|a|\neq|b|$ and $0 \notin
L[a,b].$ Then
\[
 v_{\mathbb{B}^2}(a,b) = \max \{\measuredangle (a,z_1,b),
 \measuredangle (a,z_2,b) \},
\]
where $z_1$ and $z_2$ are the points of intersection of the unit circle
and an orthogonal circle
\[
S(0,1)\cap S(c,\sqrt{|c|^2-1}), \quad c=
\displaystyle{\frac{a(1-|b|^2)- b(1-|a|^2)}{|a|^2-|b|^2}}.
\]
Moreover, $\{z_1,z_2\}= (1 \pm i \sqrt{|c|^2 -1})/\overline{c}.$
In the case $|a|=|b|$
\[
v_{\mathbb{B}^2}(a,b)= 2\, {\arctan } \left(\frac{|a-b|}{2-|a+b|}\right).
\]
\end{thm}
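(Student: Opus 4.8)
The plan is to convert the supremum over the boundary into a one–variable extremal problem and solve it explicitly. Since $a,b\in\mathbb{B}^2$ are interior points, the map $z\mapsto\measuredangle(a,z,b)$ is continuous on the compact circle $S(0,1)$ (the vertex $z=e^{i\theta}$ never coincides with $a$ or $b$), so the supremum in \eqref{vamDef} is attained and the extremal $z$ is a critical point. Writing the visual angle as $\measuredangle(a,z,b)=\bigl|\arg\frac{a-z}{b-z}\bigr|$, I would set $f(\theta)=\Im\log\frac{a-e^{i\theta}}{b-e^{i\theta}}$ and differentiate, obtaining $f'(\theta)=-\Re\!\bigl[\frac{z(b-a)}{(a-z)(b-z)}\bigr]$ with $z=e^{i\theta}$. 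The critical points are then the solutions on $|z|=1$ of $\Re\bigl[\frac{z(b-a)}{(a-z)(b-z)}\bigr]=0$.

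The heart of the argument is the simplification of this condition. On $|z|=1$ one has $\bar z=1/z$, hence the identity $\bar a z-1=z\,\overline{(a-z)}$ and its analogue for $b$; substituting these into $W+\overline{W}=0$ (where $W$ is the bracketed quotient) and clearing the common denominator $z\,|P|^2$ with $P=(a-z)(b-z)$, the numerator reduces to a quadratic in $z$. Collecting coefficients, the $z^2$-, linear, and constant terms come out to $-\bar c D$, $2D$, and $-cD$ respectively, where $D=|a|^2-|b|^2$ and $c$ is exactly the center in the statement, so the numerator equals $-D(\bar c z^2-2z+c)$. Thus, for $|a|\neq|b|$, the critical equation becomes $\bar c z^2-2z+c=0$, whose roots are $z=(1\pm i\sqrt{|c|^2-1})/\bar c$; these satisfy $|z|=1$, and dividing the equation by $z$ yields $\Re(\bar c z)=1$, which is precisely the condition for $z\in S(0,1)\cap S(c,\sqrt{|c|^2-1})$, the circle orthogonal to the unit circle. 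Since the quadratic has exactly two roots, $f$ has exactly two critical points $z_1,z_2$, one a maximum and one a minimum of the angle, giving $v_{\mathbb{B}^2}(a,b)=\max\{\measuredangle(a,z_1,b),\measuredangle(a,z_2,b)\}$.

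In the degenerate case $|a|=|b|$ the center $c$ is undefined ($D=0$), so I would return to the coefficient computation with $\rho=|a|=|b|$: the quadratic collapses to $(\bar a-\bar b)z^2+(a-b)=0$, whose roots are $z=\pm i\,(a-b)/|a-b|$, the two endpoints of the diameter along the perpendicular bisector of $[a,b]$ (which passes through $0$ since $|a|=|b|$). Using the rotation invariance of $v_{\mathbb{B}^2}$ I may normalize $a=m+is$, $b=m-is$ with $m=\tfrac{a+b}{2}$ real and $s=\tfrac{|a-b|}{2}$, so that the critical points become $z=\pm1$. A direct computation of the angle at the endpoint nearer the segment, via $\cos\theta=\Re(u^2)/|u|^2$ with $u=(m\mp1)+is$, gives $\measuredangle=2\arctan\bigl(s/(1-|m|)\bigr)$; rewriting $1-|m|=(2-|a+b|)/2$ and $s=|a-b|/2$ produces the claimed $2\arctan\bigl(|a-b|/(2-|a+b|)\bigr)$.

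The main obstacle is the algebraic collapse in the second paragraph: recognizing that the unstructured numerator factors as $-D(\bar c z^2-2z+c)$ requires careful bookkeeping together with the identity $\bar a z-1=z\,\overline{(a-z)}$, which holds only on $|z|=1$. A secondary point needing care is the passage from the signed argument $f$ to the unsigned visual angle: one must verify that the winding of $\arg\frac{a-z}{b-z}$ introduces no spurious extrema, so that the maximum of the geometric angle is genuinely attained at one of $z_1,z_2$ and equals the larger of the two values.
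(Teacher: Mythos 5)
Your proof is correct, but it takes a genuinely different route from the paper's. The paper argues geometrically: it takes the inversion $h$ of Lemma \ref{myinv} with $h(a)=b$ (which preserves $\mathbb{B}^2$ and the chord $L[a,b]\cap\mathbb{B}^2$), uses the similar triangles this creates together with the Alternate Segment Theorem argument of Theorem \ref{anglebis}, and concludes that the supremum is attained where the circle through $a,b$ internally tangent to $S(0,1)$ touches the boundary; the formula for $z_1,z_2$ then comes from Lemma \ref{myinv}, and the case $|a|=|b|$ is settled by symmetry. You instead treat \eqref{vamDef} as a one-variable optimization on the boundary circle: differentiate the signed argument and reduce the first-order condition on $|z|=1$ to a quadratic. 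Your algebra checks out — with $W=z(b-a)/((a-z)(b-z))$ and $D=|a|^2-|b|^2$, the cleared numerator of $W+\overline{W}$ is indeed $-D(\overline{c}z^2-2z+c)$ — and your critical-point equation is exactly equation \eqref{eq:v}, which the paper records in Remark \ref{z1z2formula} and verifies by two separate arguments (circle intersection and a double-root tangency computation); you obtain it more directly as the stationarity condition. What each approach buys: the paper's proof exposes the geometry (tangency, angle bisection by the orthogonal circle centered at $c$) and reuses machinery needed later (the same inversion drives the proof of Theorem \ref{main2}), while yours is self-contained, needs no auxiliary lemmas, and handles both cases uniformly, the $|a|=|b|$ formula falling out of the same coefficients at $D=0$. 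Two points you flag should each get one closing line: the branch issue is harmless because $(a-z)/(b-z)$ omits $(-\infty,0]$ for $z\in S(0,1)$ (it is positive real only at $L[a,b]\cap S(0,1)$, where the angle vanishes), so a continuous single-valued argument exists on the whole circle; and the fact that both roots of your quadratic lie on $S(0,1)$ (equivalently $|c|>1$) should be stated as a consequence of your own setup rather than assumed — the signed angle is smooth and nonconstant on the circle, so it has at least two critical points (its maximum and minimum), these must be roots of the quadratic, hence both roots lie on the circle and are distinct, which forces $|c|=2/|z_1+z_2|>1$ and legitimizes writing them as $(1\pm i\sqrt{|c|^2-1})/\overline{c}$.
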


It is easily seen that the visual angle metric $v_{\mathbb{B}^2}$ is not
invariant under M\"obius automorphisms of the unit disk. Nevertheless, we prove another formula for  the visual angle metric
involving the   M\"obius invariant hyperbolic
metric  $\rho_{\mathbb{B}^2}.$ This is possible, because for given 
$a,b \in {\mathbb{B}^2}$,
we have  $v_{\mathbb{B}^2}(a,b)= v_{\mathbb{B}^2}(h(a),h(b))$, whenever
$h $ is an inversion  with $h({\mathbb{B}^2})= {\mathbb{B}^2}$ and with  $h({\mathbb{B}^2} \cap L[a,b])= {\mathbb{B}^2}\cap L[a,b].$
%

\begin{thm} \label{main2}
  For $a,b\in \mathbb{B}^2$ we have
  \begin{equation}\label{formula for vam-2}
    \tan \frac{v_{\mathbb{B}^2}(a,b)}{2}=
    \frac{(1+|m|)u}{1+\sqrt{1+(1-|m|^2)u^2}},
    \quad u={\rm sh} \frac{\rho_{\mathbb{B}^2}(a,b)}{2},
  \end{equation}
  where $m=(\overline{a}b - a\overline{b})/(2(\overline{a}- \overline{b}))$
 is the midpoint of the chord of  the unit disk containing
 the two points $a$ and $b$ and hence $|m|=d(L[a,b],\{0\}).$
\end{thm}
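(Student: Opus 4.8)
The plan is to exploit two invariances simultaneously. The visual angle metric $v_{\mathbb{B}^2}$ is unchanged under the inversions $h$ described just before the statement (those preserving both $\mathbb{B}^2$ and the chord $\mathbb{B}^2\cap L[a,b]$); the hyperbolic distance $\rho_{\mathbb{B}^2}$ is unchanged under every such $h$, since an inversion in a circle orthogonal to $S(0,1)$ is a reflection in a hyperbolic geodesic, hence a hyperbolic isometry, so $u=\sinh\tfrac{\rho_{\mathbb{B}^2}(a,b)}{2}$ is preserved; and $|m|=d(L[a,b],\{0\})$ is unchanged because $h(a),h(b)$ again lie on $L[a,b]$. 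A rotation about the origin preserves all three quantities as well, so I would first rotate to make $m=|m|>0$ and turn $L[a,b]$ into the vertical line $\{\operatorname{Re}z=|m|\}$, writing $a=|m|+iy_a$ and $b=|m|+iy_b$. It then suffices to prove \eqref{formula for vam-2} after applying a suitable $h$, and the goal is to choose $h$ so that $|h(a)|=|h(b)|$, reducing everything to the symmetric case $|a|=|b|$ already settled in Theorem~\ref{main1}.

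The main obstacle is producing this inversion. The inversions preserving $\mathbb{B}^2$ and the vertical line are exactly those in circles $S(c,r)$ whose center $c=|m|+it$ lies on the line and which are orthogonal to $S(0,1)$, so that $r^2=|m|^2+t^2-1$; on the line such an $h$ acts by $y\mapsto t+r^2/(y-t)$. Requiring the two images to have opposite imaginary parts gives, after clearing denominators and inserting $r^2=|m|^2+t^2-1$, the quadratic
\[
(y_a+y_b)\,t^2-2\bigl(y_ay_b+1-|m|^2\bigr)t+(1-|m|^2)(y_a+y_b)=0
\]
(when $y_a+y_b=0$ the points are already symmetric and no $h$ is needed). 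I expect the two decisive facts to be that its discriminant factors as $(1-|a|^2)(1-|b|^2)>0$, so real roots exist, and that by Vieta the product of the roots equals $1-|m|^2$. Since the roots share a sign and have product $1-|m|^2$, exactly one of them satisfies $t^2>1-|m|^2$, which is precisely the condition $r^2>0$, $|c|>1$ that makes $h$ a genuine $\mathbb{B}^2$-preserving inversion. This yields the required $h$.

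It then remains to verify \eqref{formula for vam-2} in the symmetric case $b=\overline a$, $a=|m|+i\alpha$. Here Theorem~\ref{main1} gives directly $\tan\tfrac{v_{\mathbb{B}^2}(a,b)}{2}=|\alpha|/(1-|m|)$, using $|a-b|=2|\alpha|$ and $|a+b|=2|m|$. On the other hand the standard identity $u=\sinh\tfrac{\rho_{\mathbb{B}^2}(a,b)}{2}=|a-b|/\sqrt{(1-|a|^2)(1-|b|^2)}$ yields $u=2|\alpha|/(1-|m|^2-\alpha^2)$. Substituting this into the right-hand side of \eqref{formula for vam-2} and using the simplification $(1-|m|^2-\alpha^2)^2+4\alpha^2(1-|m|^2)=(1-|m|^2+\alpha^2)^2$, the square root collapses and the whole expression reduces to $|\alpha|/(1-|m|)$, matching $\tan\tfrac{v}{2}$. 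By the invariances recorded in the first paragraph, this establishes \eqref{formula for vam-2} for all $a,b\in\mathbb{B}^2$.
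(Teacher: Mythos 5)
Your proof is correct, and it follows the same overall strategy as the paper's: use invariance of $v_{\mathbb{B}^2}$, of $u=\sh (\rho_{\mathbb{B}^2}(a,b)/2)$, and of $|m|$ under inversions preserving $\mathbb{B}^2$ and the chord to reduce everything to the symmetric case $|a|=|b|$, then settle that case by computation starting from $v_{\mathbb{B}^2}(a,b)=2\arctan\bigl(|a-b|/(2-|a+b|)\bigr)$. The genuine difference lies in how the symmetrizing inversion is produced. The paper obtains it abstractly: it takes the hyperbolic midpoint of $a$ and $b$ and invokes Lemma \ref{lem-m1 and m2 inversion} (a consequence of Lemma \ref{myinv}) to get a chord-preserving inversion $h$ carrying that midpoint to $m$; since such an $h$ is a hyperbolic isometry, $h(a)$ and $h(b)$ lie on the chord at equal hyperbolic distance from $m$ on opposite sides, whence $|h(a)|=|h(b)|$. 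You instead construct $h$ explicitly: center $c=|m|+it$ on the chord, orthogonality condition $r^2=|m|^2+t^2-1$, and a quadratic in $t$ whose quarter-discriminant factors as $(1-|a|^2)(1-|b|^2)>0$ and whose roots have product $1-|m|^2$, so that exactly one root yields $r^2>0$. I verified the quadratic, the discriminant factorization, and the symmetric-case identity $(1-|m|^2-\alpha^2)^2+4\alpha^2(1-|m|^2)=(1-|m|^2+\alpha^2)^2$; all are correct. The paper's route buys brevity, since existence of $h$ and the equality $|h(a)|=|h(b)|$ come from quoted lemmas; yours buys self-containedness: no hyperbolic midpoint is needed, and the case $0\in L[a,b]$, which the paper treats separately through Lemma \ref{radial}, is absorbed into the same argument. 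Both proofs lean identically on the invariance $v_{\mathbb{B}^2}(a,b)=v_{\mathbb{B}^2}(h(a),h(b))$, which the paper justifies by Proposition \ref{myprop} and you quote from the discussion preceding the statement; and in the symmetric case the paper solves Beardon's identity $\sh(\rho/2)=2t/(1-s^2-t^2)$ for $\tan (v/2)$ where you verify \eqref{formula for vam-2} by substitution --- an immaterial difference.
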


Our third main result yields a sharp quasiregular version of the Schwarz lemma for the visual angle metric.
This result seems to be new in the case of analytic functions.

\begin{thm} \label{main3} Let $f: \mathbb{B}^2 \to \mathbb{B}^2=f( \mathbb{B}^2)$
be a non-constant $K$-quasiregular mapping, where $K\geq1$. For  $a,b\in \mathbb{B}^2,$
let $m_1 $ and $m_2$ be the midpoints of the chords of the unit disk containing
$f(a), f(b)$ and $a,b,$ respectively.
Then  we have
  \begin{equation*} 
    \tan \frac{v_{\mathbb{B}^2}(f(a),f(b))}{2} \le
    2^{1-1/K}\,c\,
    \left(\, \tan \frac{v_{\mathbb{B}^2}(a,b)}{2} \right)^{1/K}, \quad
    c=\sqrt{\frac{1+|m_1|}{1-|m_1|}}  \frac{1}{(1+|m_2|)^{1/K}},
  \end{equation*}
  with equality for $K=1$, and  $m_1= m_2=0.$
\end{thm}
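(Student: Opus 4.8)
The plan is to push everything through the hyperbolic metric using Theorem~\ref{main2}, apply the quasiregular Schwarz lemma there, and pull the conclusion back. Abbreviate $\rho_1=\rho_{\mathbb{B}^2}(f(a),f(b))$ and $\rho_2=\rho_{\mathbb{B}^2}(a,b)$, and put $W_j=\tanh(\rho_j/4)$, $p_j=|m_j|$ for $j=1,2$. Using $\sinh(\rho/2)=2W/(1-W^2)$ I would first recast \eqref{formula for vam-2} in the half-angle variable $W=\tanh(\rho/4)$; a direct computation turns it into
\[
\tan\frac{v_{\mathbb{B}^2}}{2}=\frac{2(1+p)W}{(1-W^2)+\sqrt{(1+W^2)^2-4p^2W^2}} .
\]
The denominator is decreasing on $W\in[0,1)$, so it lies between its endpoint values $2\sqrt{1-p^2}$ (at $W\to1$) and $2$ (at $W=0$). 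Bounding the denominator below for the image pair and above for the domain pair gives the two one-sided estimates I really need,
\[
\tan\frac{v_{\mathbb{B}^2}(f(a),f(b))}{2}\le\sqrt{\frac{1+p_1}{1-p_1}}\,W_1,
\qquad
\tan\frac{v_{\mathbb{B}^2}(a,b)}{2}\ge(1+p_2)\,W_2 ,
\]
both of which become equalities exactly when the relevant midpoint is the origin.

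The core step is the comparison $W_1\le 2^{\,1-1/K}W_2^{1/K}$. I would begin from the hyperbolic form of the quasiregular Schwarz lemma, $\tanh(\rho_1/2)\le\varphi_K(\tanh(\rho_2/2))$, where $\varphi_K=\mu^{-1}(\mu(\cdot)/K)$ and $\mu$ is the Gr\"otzsch modulus function, and then transfer it to the half-angle variable. The Landen relation $\mu\!\left(2\sqrt r/(1+r)\right)=\tfrac12\mu(r)$, applied with $r=W_j^2$ and $\tanh(\rho_j/2)=2W_j/(1+W_j^2)$, gives $\mu(W_j^2)=2\mu(\tanh(\rho_j/2))$. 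Since $\mu$ is decreasing and $\mu(\varphi_K(t))=\mu(t)/K$, the Schwarz inequality becomes $\mu(W_1^2)\ge\mu(W_2^2)/K$, that is,
\[
\tanh^2\frac{\rho_1}{4}\le\varphi_K\!\left(\tanh^2\frac{\rho_2}{4}\right),\qquad\text{i.e.}\qquad W_1^2\le\varphi_K(W_2^2).
\]
Now the standard estimate $\varphi_K(r)\le 4^{\,1-1/K}r^{1/K}$ yields $W_1^2\le 4^{\,1-1/K}W_2^{2/K}$, and taking square roots gives $W_1\le 2^{\,1-1/K}W_2^{1/K}$.

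Chaining the three estimates then closes the argument: the image estimate, followed by $W_1\le 2^{\,1-1/K}W_2^{1/K}$, followed by the $1/K$-th power of the domain estimate $W_2\le\tan(v_{\mathbb{B}^2}(a,b)/2)/(1+p_2)$, produces
\[
\tan\frac{v_{\mathbb{B}^2}(f(a),f(b))}{2}\le 2^{\,1-1/K}\sqrt{\frac{1+p_1}{1-p_1}}\,\frac{1}{(1+p_2)^{1/K}}\left(\tan\frac{v_{\mathbb{B}^2}(a,b)}{2}\right)^{1/K},
\]
which is the asserted inequality with the stated constant $c$. For sharpness I would note that $K=1$ makes $\varphi_1=\mathrm{id}$ and turns $\varphi_K(r)\le 4^{1-1/K}r^{1/K}$ into an identity, while $m_1=m_2=0$ makes both Theorem~\ref{main2} estimates identities and reduces the formula to $\tan(v_{\mathbb{B}^2}/2)=\tanh(\rho/4)$; a $1$-quasiregular self-map of $\mathbb{B}^2$ onto itself is a conformal (hyperbolic) isometry, so $\rho_1=\rho_2$ and equality holds.

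The delicate point is pinning down the constant. Running the estimates naively through $\tanh(\rho/2)$ and $\varphi_K(r)\le 4^{1-1/K}r^{1/K}$ introduces a spurious factor $2$ and would not even be sharp at $K=1$, the loss coming from the nonlinear passage between $\tanh(\rho/2)$ and $\tanh(\rho/4)$. The mechanism that eliminates it is the Landen identity $W_1^2\le\varphi_K(W_2^2)$, which is exactly adapted to the half-angle $\tanh(\rho/4)$ that Theorem~\ref{main2} produces; once this identity is in hand, the exponent $1/K$, the constant $2^{\,1-1/K}$, and the factor $c$ all appear automatically, and equality can be traced term by term to the case $K=1$, $m_1=m_2=0$.
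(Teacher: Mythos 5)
Your argument is correct and is essentially the paper's own proof in different clothing: the two one-sided bounds you extract from the $W$-parametrization are exactly Proposition \ref{myProp2} combined with Theorem \ref{main2}, your Landen-based step $W_1^2\le\varphi_K(W_2^2)$ is precisely the functional identity the paper cites as Lemma \ref{SLem}(2) (AVV, Theorem 10.5) applied together with the Schwarz lemma of Lemma \ref{SLem}(1), and the finish via $\varphi_K(r)\le 4^{1-1/K}r^{1/K}$ is identical. One peripheral slip: a $1$-quasiregular map of $\mathbb{B}^2$ onto itself need not be a hyperbolic isometry (finite Blaschke products are analytic, onto, and non-injective), but sharpness survives because the identity map already attains equality when $K=1$ and $m_1=m_2=0$.
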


In a later work, these theorems will be applied  to prove
inequalities between the
visual angle metric and the ubiquitous quasihyperbolic and distance ratio
metrics \cite{gh,hkv} and some other intrinsic metrics \cite{ha} such as
the Hilbert metric \cite{p, rv}.

\section{Preliminary results}\label{Sec2}

We will give here some formulas about the geometry of
lines and triangles, on which our later work is based.

\begin{nonsec}{\bf Geometry and complex numbers.}
The extended complex plane $\overline{\mathbb{C}}=
{\mathbb{C}} \cup \{\infty\}$ is identified with the
Riemann sphere via the stereographic projection.
Let $L[a,b]$ stand for the line through $a$ and $b\,(\neq a)$.
For distinct points $a,b,c,d \in {\mathbb{C}}$ such that the lines
$L[a,b]$ and  $L[c,d]$ have a unique
point $w$ of intersection, let
$$
w=LIS[a,b,c,d] = L[a,b]\cap L[c,d].
$$
This point is given by
\begin{equation}{\label{LIS}}
w=LIS[a,b,c,d] =\frac{u}{v},
\end{equation}
with (see e.g. \cite[Ex. 4.3(1), p. 57 and p. 373]{hkv})
\begin{equation}\label{Lineint}
\begin{cases}
{\displaystyle
{u= (\overline{a}b -a  \overline{b})(c-d)-(a-b) ( \overline{c}d -c  \overline{d}); }}&\\
{\displaystyle {v = ( \overline{a}- \overline{b})(c-d)-(a-b) (\overline{c} - \overline{d}).}}&
\end{cases}
\end{equation}

Let $C[a,b,c]$  be the  circle  through distinct noncollinear points $a$, $b$,
and $c$. The formula \eqref{LIS} gives easily the formula for the center
$m(a,b,c)$ of $C[a,b,c]$. For instance, we can find two points on
the bisecting normal to the side $[a,b]$ and another
two points on the bisecting normal to the side $[a,c]$ and then apply
\eqref{LIS}  to get $m(a,b,c).$
In this way we see that the center $m(a,b,c)$ of $C[a,b,c]$  is
\begin{equation}\label{mfun}
m(a,b,c)=\frac{ |a|^2(b-c) +  |b|^2(c-a) +  |c|^2(a-b) }
{a(\overline{c}-\overline{b}) +b(\overline{a}-\overline{c})+ c(\overline{b}-\overline{a})}.
\end{equation}


We sometimes use the notation $a^* = a/|a|^2 = 1/\overline{a}$
for $a\in {\mathbb C} \setminus \{0\}$. The reflection of a point $z$ in
the line through two distinct points $a$, and $b$ is given by
\begin{equation}\label{myrefl}
w(z) = \frac{a-b}{ \overline{a}- \overline{b}}\, \overline{z} -\frac{a \overline{b}- \overline{a}b}{ \overline{a} -  \overline{b}}.
\end{equation}

\end{nonsec}

\begin{nonsec}{\bf Proposition.}\label{myprop} {\it
Let $0<r <s<1$, $r_1= 1/r$ and $s_1=1/s$. Also, let $c \in \mathbb{B}^2$ with
${\rm Re}\, c =(s+r)/2,$
$c_1\in \mathbb{C}$ with ${\rm Re}\, c_1 =(s_1+r_1)/2$, and $\arg c = \arg c_1.$
If $a \in S(c,|c-r|)$ and $b\in S(c_1,|c_1-r_1|),$ then }
$2\measuredangle(s,a,r)= 2\measuredangle(r_1,b,s_1)= \measuredangle(r,c,s).$
\end{nonsec}
We omit the simple proof with a reference to Figure \ref{FigVamProp}.
\begin{figure}
    \centering
\begin{tikzpicture}[scale=3.5]
   \draw[thick] (0,0) -- (2,0); 
   \node [black] at (0,-0.01) {\textbullet}; 
   \node[scale=1] at (-0.04,-0.1) {$0$}; 
   \draw[thick] (0,0) circle (0.5 cm); 
   \node[scale=1] at (-0.45,-0.45) {$\mathbb{B}^2$}; 
   \node [black] at (0.15,-0.01) {\textbullet}; 
   \node[scale=1] at (0.15,-0.1) {$r$}; 
   \node [black] at (0.5,-0.01) {\textbullet}; 
   \node[scale=1] at (0.55,-0.1) {$1$}; 
   \node [black] at (0.3,-0.01) {\textbullet}; 
   \node[scale=1] at (0.3,-0.1) {$s$}; 
   \node [black] at (0.225,0.14) {\textbullet}; 
   \node[scale=1] at (0.225,0.22) {$c$};
   \draw[thick] (0.225,0.1454545) circle (0.163652 cm); 
  \draw[thick] (1.235,0.808080555) circle (0.902019 cm); 
   \node [black] at (0.07,0.19) {\textbullet}; 
   \node[scale=1] at (0.05,0.27) {$a$}; 
   \draw[thick,red] (0.15,0) -- (0.07,0.2); 
   \draw[thick,red] (0.3,0) -- (0.07,0.2); 
   \draw[thick,dashed] (0.15,0) -- (0.225,0.1454545); 
   \draw[thick,dashed] (0.3,0) -- (0.225,0.1454545); 
   \node [black] at (1.64,-0.01) {\textbullet}; 
   \node[scale=1] at (1.66,-0.1) {$r_1$}; 
   \node [black] at (0.8333,-0.01) {\textbullet}; 
   \node[scale=1] at (0.8333,-0.1) {$s_1$}; 
   \node [black] at (1.25,0.79) {\textbullet}; 
   \node[scale=1] at (1.25,0.888080555) {$c_1$}; 
   \draw[thick] (0,0) -- (1.25,0.808080555); 
   \node [black] at (1.92,1.38) {\textbullet}; 
   \node[scale=1] at (1.97,1.45) {$b$}; 
   \draw[thick,blue] (0.8333,0) -- (1.92,1.39); 
   \draw[thick,blue] (1.64,0) -- (1.92,1.39); 
   \draw[thick, dashed] (0.8333,0) -- (1.25,0.808080555); 
   \draw[thick, dashed] (1.64,0) -- (1.25,0.808080555); 
\end{tikzpicture}
    \caption{Here $s_1=1/s, r_1=1/r.$ The key points are that the triangles
    $\triangle(r,s,c)$ and  $\triangle(s_1,r_1,c_1)$ are similar, and
     hence the angles $\measuredangle(r,a,s)$ and $\measuredangle(s_1,b,r_1)$
    are equal.}
    \label{FigVamProp}
\end{figure}
\begin{nonsec}{\bf M\"obius transformations.}\label{mymob}
A M\"obius transformation is a mapping of the form
$$z \mapsto \frac{az+b}{cz+d}\,, \quad a,b,c,d,z \in {\mathbb C}\,, ad-bc\neq 0.$$
The special M\"obius transformation
\begin{equation}\label{myTa}
T_a(z) = \frac{z-a}{1- \overline{a}z}, \quad a \in \B \setminus \{0\},
\end{equation}
maps the unit disk $\mathbb{B}^2$ onto itself with
$T_a(a) =0$, and $T_a( \pm a/|a|)= \pm a/|a|.$
In complex analysis, quadruples of points have a very special role:
the absolute cross-ratio of four points $a$, $b$, $c$, and $d$ in the complex
plane $\mathbb{C}$,
\begin{equation}\label{myaxrat}
|a,b,c,d| = \frac{|a-c||b-d|}{|a-b||c-d|},
\end{equation}
is invariant under M\"obius transformations.
\end{nonsec}

\begin{nonsec}{\bf Hyperbolic geometry.}
We recall some basic formulas and notation for hyperbolic geometry from \cite{b}.
The hyperbolic metric $\rho_{\mathbb{B}^2}$ is defined by
\begin{equation}\label{myrho}
\sh \frac{\rho_{\mathbb{B}^2}(a,b)}{2}=
\frac{|a-b|}{\sqrt{(1-|a|^2)(1-|b|^2)}} ,\quad a,b\in \mathbb{B}^2.
\end{equation}

For $a,b \in \B \setminus  \{ 0 \}$ let
$${\rm ep}(a,b)= T_{-b}(T_b(a)/|T_b(a)|).$$
This formula defines the endpoints ${\rm ep}(a,b)$ and  ${\rm ep}(b,a)$ on the
unit circle of the hyperbolic line through $a$ and $b.$
The hyperbolic metric also satisfies
$$
\rho_{\B}(a,b)= \log |{\rm ep}(a,b),a,b, {\rm ep}(b,a)|.
$$

The circle which is orthogonal to the unit circle and contains two
distinct points
$a,b\in \mathbb{C}$ is denoted by $C[a,b]$.
If $a,b \in {\mathbb{B}^2}$ are distinct points, then
$C[a,b]\cap \partial {\mathbb{B}^2} = \{a_*, b_*\}$
where the points are labelled in such a way that $a_*$, $a$, $b$, and $b_*$
occur in this order on  $C[a,b].$ Note that above we have
used the notation ${\rm ep}(a,b)=a_*$, and  ${\rm ep}(b,a)= b_*.$
We denote by $J[a,b]$ the hyperbolic geodesic segment joining
two distinct points $ a,b \in {\mathbb{B}^2}.$
Then $J[a,b] $ is a subarc of
$ C[a,b]\cap{\mathbb{B}^2}\,$ and
the hyperbolic line is $J^*[a_*,b_*]= C[a,b]\cap{\mathbb{B}^2}$.
\end{nonsec}


\begin{lem}\label{myinv} Suppose that
$a,b\in {\mathbb B}^2 \setminus \{0\},$ are two points non-collinear with $0$
and $ |a|\neq |b|.$  Then the inversion $h: \B\to \B$ with
$ h(a) = b$ is given by
\begin{equation}\label{myTab}
h(z)= \frac{c\overline{z} -1}{\overline{z}-\overline{c}},
\quad c=LIS[a,b, a^*, b^*]= \frac{a-b +
 a b ( \overline{a}- \overline{b})}{|a|^2-|b|^2},
\end{equation}
and $h$ maps the chord $L[a,b]\cap {\mathbb B}^2$ onto itself. Moreover,
the orthogonal circle $S(c,\sqrt{|c|^2-1})$
intersects the  unit circle at the points
$\{z_1,z_2\}=(1\pm i\,\sqrt{|c|^2-1} )/\overline{c}.$
\end{lem}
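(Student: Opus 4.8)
The plan is to obtain $h$ not by guessing a formula but by reading off its center from two incidence conditions, and then to confirm the stated closed form by a short computation. The governing principle is that every disk-preserving inversion commutes with the inversion $\sigma(z)=1/\overline z$ in the unit circle: an inversion $h$ in a circle $\Gamma$ fixes $\mathbb{B}^2$ exactly when $\Gamma$ is orthogonal to $S(0,1)$, i.e.\ when $\sigma(\Gamma)=\Gamma$, and conjugating $h$ by $\sigma$ gives the inversion in $\sigma(\Gamma)=\Gamma$, so $\sigma h\sigma=h$. Hence from $h(a)=b$ one gets $h(a^*)=h(\sigma(a))=\sigma(h(a))=\sigma(b)=b^*$. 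Since a point and its image under an inversion always lie on one ray through the center, the center $c$ must lie on $L[a,b]$ and on $L[a^*,b^*]$; the hypotheses $0\notin L[a,b]$ and $|a|\neq|b|$ are precisely what makes these two lines distinct and non-parallel (if $0\in L[a,b]$ then $a,a^*,b,b^*,0$ are collinear, and if $|a|=|b|$ the two lines are parallel by reflection symmetry in the perpendicular bisector of $[a,b]$). Their unique intersection therefore forces $c=LIS[a,b,a^*,b^*]$.

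First I would evaluate this intersection from the general formula \eqref{LIS}--\eqref{Lineint} with $a^*=1/\overline a$, $b^*=1/\overline b$. After clearing the common factor $|a|^2|b|^2$, the numerator carries a factor $\overline a b-a\overline b$ while the denominator factors as $(a\overline b-\overline a b)(|a|^2-|b|^2)$; cancelling this shared factor collapses the expression to the stated $c=(a-b+ab(\overline a-\overline b))/(|a|^2-|b|^2)$, with the denominator nonzero exactly because $|a|\neq|b|$. (The excluded case $|a|=|b|$ is the reflection-in-a-diameter situation handled separately in Theorem \ref{main1}.)

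Next I would verify that $h(z)=(c\overline z-1)/(\overline z-\overline c)$ is the desired inversion. Rewriting $h(z)=c+(|c|^2-1)/(\overline z-\overline c)$ displays it as the inversion in the circle $S(c,\sqrt{|c|^2-1})$, and a one-line computation gives $h\circ h=\mathrm{id}$, so $h$ is an anticonformal involution. Disk-preservation I would get by checking $|h(z)|=1$ for $|z|=1$: there $\overline z=1/z$ turns $h$ into $(c-z)/(1-\overline c z)$, and $|c-z|^2=|1-\overline c z|^2$ when $|z|=1$. Combined with the relation $h(a)=b$ --- which reduces to the single scalar identity $c\overline a+\overline c b=1+\overline a b$, whose numerator after substituting $c$ collapses to $(|a|^2-|b|^2)(1+\overline a b)$ --- this yields $h(\mathbb{B}^2)=\mathbb{B}^2$. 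The existence of the orthogonal circle, i.e.\ $|c|>1$, is then secured conceptually: $h$ is an anticonformal involution of $\mathbb{B}^2$, hence a hyperbolic reflection, so its fixed-point set is a geodesic, an arc of a circle orthogonal to $S(0,1)$, whose full circle must be $S(c,\sqrt{|c|^2-1})$ and so has $|c|>1$. That $h$ carries the chord $L[a,b]\cap\mathbb{B}^2$ onto itself is immediate, since $c\in L[a,b]$ means the inversion preserves that line, and the only points of $S(0,1)$ on it are the two chord endpoints, which must therefore be permuted.

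Finally, for the intersection points I would locate $S(c,\sqrt{|c|^2-1})\cap S(0,1)$ as the fixed points of $h$ on the unit circle, i.e.\ solve $(c-z)/(1-\overline c z)=z$ with $|z|=1$. This is the quadratic $\overline c\,z^2-2z+c=0$, whose roots are $z=(1\pm\sqrt{1-|c|^2})/\overline c=(1\pm i\sqrt{|c|^2-1})/\overline c$, exactly the claimed $\{z_1,z_2\}$; a quick check confirms $|z|=1$. The one genuinely delicate point in the argument is this existence step, $|c|>1$, and I would settle it through the involution/hyperbolic-reflection reasoning above rather than by estimating the closed-form expression for $c$ directly, since that route is both shorter and geometrically transparent.
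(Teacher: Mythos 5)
Your proof is correct, and its computational core coincides with the paper's own (very terse) proof: compute $c=LIS[a,b,a^*,b^*]$ from \eqref{LIS}--\eqref{Lineint}, verify $h(a)=b$ directly, and obtain $z_1,z_2$ from a quadratic (your fixed-point equation $\overline{c}z^2-2z+c=0$ is exactly what the paper's system $|z|^2=1$, $|z-c|^2=|c|^2-1$ reduces to). What you add beyond the paper is substantive, though. First, an a priori derivation of the center: from $\sigma h\sigma=h$ (with $\sigma(z)=1/\overline{z}$) you get $h(a^*)=b^*$, so the center must lie on both $L[a,b]$ and $L[a^*,b^*]$, and you show the hypotheses $0\notin L[a,b]$, $|a|\neq|b|$ are exactly what makes these lines meet in one point; the paper simply posits $c=LIS[a,b,a^*,b^*]$. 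Second, you verify facts the paper's proof leaves entirely implicit: that $h$ maps $\mathbb{B}^2$ onto $\mathbb{B}^2$ rather than swapping it with the exterior (using $h(a)=b$ together with $|h(z)|=1$ on $|z|=1$), and --- the genuinely delicate point --- that $|c|>1$, so the orthogonal circle $S(c,\sqrt{|c|^2-1})$ actually exists; your argument via ``anticonformal involution of the disk $\Rightarrow$ hyperbolic reflection $\Rightarrow$ nonempty fixed-point set'' settles this, and one can even shortcut it: if $|c|<1$ then $h(c)=\infty$ would contradict $h$ preserving the disk, while $|c|=1$ makes $h$ constant. Your proof is thus a completed version of the paper's; the paper's buys brevity by treating these verifications as routine, yours buys a self-contained and geometrically motivated argument.
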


\begin{proof}
The above simple formula for  $c$  follows from the
formulas \eqref{LIS} and \eqref{Lineint} for the intersection of two lines as
$c= LIS[a,b,a^*, b^*]$ and checking $h(a)=b$ is a simple verification.
The points
$z_1, z_2$ are found by solving the equations $|z|^2=1$ and
$|z-c|^2 = |c|^2-1.$
\end{proof}


We start our discussion of the visual angle metric in the simple case of
points on the same radius of the unit disk.


\begin{figure}
    \centering
\begin{tikzpicture}[scale=3.5]
   \draw[thick] (0, 0) circle (1 cm); 
   \node [black] at (0,0) {\textbullet}; 
   \node[scale=1.1] at (-0.9,-0.9) {$\mathbb{B}^2$}; 
   \node[scale=1] at (-0.04,-0.1) {$0$}; 
   \node[scale=1] at (-0.02, -0.67) {$a$}; 
   \node[scale=1] at (0.7, -0.1) {$b$}; 
   \node[scale=1] at (0, -1.35) {$a^*$}; 
   \node[scale=1] at (-0.02, -1.1) {$a_*$}; 
   \node[scale=1] at (1.1, - 0.2) {$b_*$}; 
   \node[scale=1] at (1.6, -0.6) {$b^*$}; 
   \node[scale=1] at (0.17, -0.35) {$m$}; 
   \node[scale=1] at (-1, -1.8) {$c$}; 
   \draw[thick] (-0.9, -1.7) -- (0.6, -0.2); 
   \draw[thick] (-0.9, -1.7) -- (1.5, -0.5); 
   \draw[thick, blue] (-0.188701,-0.0112989) circle (0.810961 cm); 
   \draw[thick, blue] (0.294583, -0.494583) circle (0.424333 cm); 
   \draw[thick, dashed] (0.128571, -0.328571) circle (0.488647 cm); 
   \draw[thick, red] (-0.9, -1.7) ++(30.6:1.648cm) arc (30.6:93.6:1.648cm);
   \draw[thick] (0.825,-1.025) ++(79.5:0.855cm) arc (79.5:178.3:0.855cm);
   \node [black] at (0, -0.81) {\textbullet}; 
   \node [black] at (0.6, -0.21) {\textbullet}; 
   \node [black] at (0, -1.26) {\textbullet}; 
   \node [black] at (-0.029753, -1.01) {\textbullet}; 
   \node [black] at (0.982822, - 0.194558) {\textbullet}; 
   \node [black] at (1.5, -0.51) {\textbullet}; 
   \node [black] at (0.180959, -0.47) {\textbullet}; 
   \node [black] at (-0.9, -1.71) {\textbullet}; 
\end{tikzpicture}
\caption{The inversion $h(z)= ({c \overline{z}-1})/({\overline{z}-\overline{c}})$ with $c=LIS[a,b,a^*, b^*]$ maps the unit disk onto itself and the chord containing $a$ and $b$ onto itself. The point $m$ is the hyperbolic midpoint of $a$ and $b$, and the hyperbolic circle through $a$ and $b$ centered at $m$ is drawn with a dashed line.
}
\label{fig-vam02}
\end{figure}

\begin{nonsec}{ \bf The case of  radial points $v_{\B}(r,s)$, where $0<r<s<1$.}
\label{vBrad}
Writing,
\begin{equation} \label{mycd}
c= \frac{r+s}{2}, \quad d=\frac{\sqrt{(1-r^2)(1-s^2)}}{2}, \quad {\rm and}\quad c^2+d^2 =
\left(\frac{1+rs}{2}\right)^2,
\end{equation}
we easily see that the circle $S(c+id,(1-rs)/2)$ passes through the points
$r$ and $s$ and is internally tangent to $S(0,1)$ at the point
 \[ p=\left(\frac{r+s}{1+rs},
\frac{\sqrt{(1-r^2)(1-s^2)}}{1+rs} \right).\] Therefore
\begin{equation} \label{myvBrad}
v_{\B}(r,s)= {\arcsin} \frac{s-r}{{1-r s}}.
\end{equation}
{Moreover, the segment $[-i,p]$ bisects the angle
$\measuredangle(r, p, s)$ and the circle orthogonal
to the unit circle at the points $p$ and $\overline{p}$ passes through
the hyperbolic midpoint
\[
\frac{r+s}{1+rs+ \sqrt{(1-r^2)(1-s^2)}},
\]
 of the segment $[r,s].$}
\end{nonsec}

\begin{lem}{\rm (\cite[Lemma 3.10]{klvw})} \label{radial}
For $a,b \in \mathbb{B}^2$ collinear with $0$ we have
$$\tan v_{ \mathbb{B}^2}(a,b)= {\rm sh} \frac{\rho_{ \mathbb{B}^2}(a,b)}{2}.
$$
\end{lem}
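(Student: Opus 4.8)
The plan is to reduce to the radial computation already recorded in \ref{vBrad} and then carry out a short trigonometric simplification. First I would exploit the fact that both sides of the claimed identity are invariant under rotations of $\mathbb{B}^2$ about the origin. The hyperbolic metric is M\"obius invariant by construction, and the visual angle metric is manifestly preserved by any such rotation $R$: since $R$ fixes $\partial\mathbb{B}^2$ setwise and preserves angles, substituting $z=Rw$ in the defining supremum \eqref{vamDef} gives $v_{\mathbb{B}^2}(Ra,Rb)=v_{\mathbb{B}^2}(a,b)$. Hence, after applying a suitable rotation, I may assume $a$ and $b$ lie on the real axis, say $a=r$ and $b=s$ with $-1<r<s<1$.

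Next I would invoke the radial formula \eqref{myvBrad}, namely $v_{\mathbb{B}^2}(r,s)=\arcsin\frac{s-r}{1-rs}$. Although \ref{vBrad} states this for $0<r<s<1$, the tangent-circle construction behind it goes through verbatim for any real $-1<r<s<1$: writing $c=(r+s)/2$ and $d=\sqrt{(1-r^2)(1-s^2)}/2$ as in \eqref{mycd}, one still has $c^2+d^2=\big((1+rs)/2\big)^2$ and $1-rs>0$, so the circle $S(c+id,(1-rs)/2)$ through $r$ and $s$ remains internally tangent to the unit circle, and the visual angle is realized at the tangency point. In particular the case in which $a$ and $b$ straddle the origin, i.e. the two points lie on a common diameter, is covered by allowing $r<0<s$.

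Writing $\theta=\arcsin\frac{s-r}{1-rs}$, I would then compute $\cos\theta$ from the Pythagorean identity and use the key algebraic simplification $(1-rs)^2-(s-r)^2=(1-r^2)(1-s^2)$ to obtain $\tan\theta=\frac{s-r}{\sqrt{(1-r^2)(1-s^2)}}$. Comparing this with the definition \eqref{myrho} of the hyperbolic metric, namely $\sinh\frac{\rho_{\mathbb{B}^2}(r,s)}{2}=\frac{|r-s|}{\sqrt{(1-r^2)(1-s^2)}}$, yields at once the asserted equality $\tan v_{\mathbb{B}^2}(a,b)=\sinh\frac{\rho_{\mathbb{B}^2}(a,b)}{2}$.

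The only point requiring genuine care, and hence the main (though mild) obstacle, is the extension of the radial formula to signed $r,s$: one must confirm that the supremum defining $v_{\mathbb{B}^2}$ is still attained at the tangency point $p$ when the two points lie on opposite sides of the origin, rather than at some competing boundary point. Once that is settled, everything else is a direct substitution into \eqref{myvBrad} and \eqref{myrho} together with the one-line identity $(1-rs)^2-(s-r)^2=(1-r^2)(1-s^2)$.
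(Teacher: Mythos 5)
Your proposal is correct, and it is in fact more self-contained than the paper itself, which offers no proof of Lemma \ref{radial} at all: the statement is imported from \cite[Lemma 3.10]{klvw}, and the only related computation in the text is the radial case \ref{vBrad}, carried out for $0<r<s<1$. Your argument upgrades that computation to a complete proof. The rotation reduction is sound, since both \eqref{vamDef} and \eqref{myrho} are visibly invariant under rotations about the origin. The extension of \eqref{myvBrad} to signed $-1<r<s<1$ is also legitimate, for the reason you give together with one point you leave implicit: internal tangency needs $|c+id|=(1+rs)/2$, not merely $c^2+d^2=\left((1+rs)/2\right)^2$, i.e. it needs $1+rs>0$; this holds automatically because $|rs|<1$, and then $|c+id|+(1-rs)/2=1$ as required. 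The attainment question you flag as the main obstacle closes by the same inscribed-angle monotonicity that underlies \ref{vBrad} in the first place, and it is insensitive to the sign of $r$: the disk bounded by $S(c+id,(1-rs)/2)$ lies in $\overline{\mathbb{B}^2}$ and touches $S(0,1)$ only at $p$, so every boundary point $z\neq p$ with ${\rm Im}\,z>0$ lies outside that circle and satisfies $\measuredangle(r,z,s)<\arcsin\frac{s-r}{1-rs}$, with equality at $p$; the lower half of the unit circle is handled by the reflected circle $S(c-id,(1-rs)/2)$, and $z=\pm 1$ give angle zero. Finally, your algebra is right: $(1-rs)-(s-r)=(1+r)(1-s)>0$ guarantees the angle is acute, and $(1-rs)^2-(s-r)^2=(1-r^2)(1-s^2)$ yields $\tan\theta=(s-r)/\sqrt{(1-r^2)(1-s^2)}=\sinh\left(\rho_{\mathbb{B}^2}(r,s)/2\right)$. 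In short, your route buys the paper a proof it currently delegates to \cite{klvw}, at the modest cost of verifying the tangency construction for signed coordinates, which you correctly identify as the only real issue and which does go through.
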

\begin{figure}
    \centering
\begin{tikzpicture}[scale=3.5]
   \draw[thick] (0, 0) circle (1 cm); 
   \node[scale=1.1] at (-0.9,-0.9) {$\mathbb{B}^2$}; 
   \node[scale=1] at (-0.04,-0.1) {$0$}; 
   \node[scale=1] at (0.330085, -0.1) {$r$}; 
   \node[scale=1] at (0.729997, -0.1) {$s$}; 
   \draw[thick] (0,0) -- (0.729997, 0);
   \node[scale=1] at (0.93,0.6) {$p$}; 
   \node[scale=1] at (0.93,-0.6) {$\overline{p}$}; 
   \draw[thick] (0,0) -- (0.854243,0.519875); 
   \draw[thick] (0.729997, 0) -- (0.854243,0.519875); 
   \draw[thick] (0.330085, 0) -- (0.854243,0.519875); 
   \draw[thick, red] (0, -1) -- (0.854243,0.519875); 
   \node[scale=1] at (0,-1.1) {$-i$}; 
   \draw[thick, blue, dotted] (0.530041,0.322572) circle (0.37952 cm); 
   \draw[thick, dashed] (1.17063, 0) ++(121:0.61cm) arc (121:239:0.61cm);
\node[black, red] at (0.562048,-0.01) {\textbullet}; 
   \node [black] at (0,-0.01) {\textbullet}; 
   \node[black] at (0.330085, -0.01) {\textbullet}; 
   \node[black] at (0.729997, -0.01) {\textbullet}; 
   \node[black] at (0.854243,0.509875) {\textbullet}; 
   \node[black] at (0.854243, -0.53) {\textbullet}; 
   \node[black] at (0.530041, 0.312572) {\textbullet}; 
   \node[black] at (0,-1.01) {\textbullet}; 
\end{tikzpicture}
\caption{The segment $[-i, p]$ bisects the angle $\measuredangle(r, p, s)$. The dashed circle orthogonal to the unit circle at the points $p$ and $\overline{p}$ passes through the hyperbolic midpoint of the segment $[r,s]$. Note that ${\rm Re}\{p\}=(r+s)/(1+rs)={\rm th}((\rho_{\B}(0,r)+\rho_{\B}(0,s))/2)$.}
\label{fig:03}
\end{figure}



\section{A formula for the visual angle metric}
%
%

\begin{nonsec}{\bf Angle bisection property.}
The next theorem, on the other hand, generalizes the above observations connected with the visual metric of two points on the same radius. On the other hand, it also yields a proof of Theorem \ref{main1}.
\end{nonsec}

\begin{figure}[!ht]
\centering
\subfigure[]{
\begin{tikzpicture}[scale=2]
   \draw[thick] (0, 0) circle (1 cm); 
   \node[scale=1] at (-0.9,-0.9) {$\mathbb{B}^2$}; 
   \draw[thick] (1.4, 0.45) circle (1.07819 cm); 
   \draw[thick, dashed] (0.253821, 0.543668) circle (0.4 cm); 
   \draw[thick] (0.032249, 1.08856) -- (1.4, 0.45); 
   \draw[thick] (-0.621286, 0.955321) -- (1.4, 0.45); 
   \draw[thick] (0,0) -- (0.423035, 0.906113); 
   \draw[thick] (0.642183, 0.639454) -- (0.423035, 0.906113); 
   \draw[thick] (-0.0437754, 0.810944) -- (0.423035, 0.906113); 
   \draw[thick] (0.353999, 0.7115) -- (0.423035, 0.906113); 
   \node [black] at (0,-0.01) {\textbullet}; 
   \node[scale=1] at (-0.08,-0.15) {$0$}; 
   \node [black] at (1.4, 0.43) {\textbullet}; 
   \node[scale=1] at (1.5, 0.38) {$c$}; 
   \node [black] at (0.423035, 0.886113) {\textbullet}; 
   \node[scale=1] at (0.4, 1.08) {$v$}; 
   \node [black] at (0.353999, 0.7) {\textbullet}; 
   \node[scale=1] at (0.42, 0.6) {$u$}; 
   \node [black] at (0.253821, 0.533668) {\textbullet}; 
   \node [black] at (-0.0437754, 0.79) {\textbullet}; 
   \node[scale=1] at (-0.1, 0.92) {$a$}; 
   \node [black] at (0.642183, 0.619454) {\textbullet}; 
   \node[scale=1] at (0.72, 0.51) {$b$}; 
\end{tikzpicture}
    \label{Fig-Angbis01}
}
\hspace*{3mm}
\subfigure[]{
\begin{tikzpicture}[scale=3]
   \draw[thick] (0,0) ++(18:1cm) arc (18:126:1cm);
   \draw[thick, dashed] (0.253821, 0.543668) circle (0.4 cm);
   \draw[thick] (-0.621286, 0.955321) -- (1.4, 0.45); 
   \draw[thick] (0.032249, 1.08856) -- (1.4, 0.45); 
  \draw[thick] (0,0) -- (0.423035, 0.906113); 
   \draw[thick] (0.642183, 0.639454) -- (0.423035, 0.906113); 
   \draw[thick] (-0.0437754, 0.810944) -- (0.423035, 0.906113); 
   \draw[thick] (0.353999, 0.7115) -- (0.423035, 0.906113); 
   \draw[thick] (0.642183, 0.639454) ++(130:0.08cm) arc (130:167:0.08cm);
   \node[scale=0.7] at (0.51,0.72) {$\phi$}; 
   \draw[thick] (0.423035, 0.906113) ++(152:0.08cm) arc (152:190:0.08cm);
   \node[scale=0.7] at (0.3,0.915) {$\phi$}; 
      \node [black] at (0.423035, 0.896113) {\textbullet}; 
   \node[scale=1] at (0.47, 0.99) {$v$}; 
   \node [black] at (0.353999, 0.7015) {\textbullet}; 
   \node[scale=1] at (0.39, 0.62) {$u$}; 
   \node [black] at (-0.0437754, 0.79944) {\textbullet}; 
   \node[scale=1] at (-0.07, 0.9) {$a$}; 
   \node [black] at (0.642183, 0.629454) {\textbullet}; 
   \node[scale=1] at (0.72, 0.55) {$b$}; 
   \node [black] at (1.4, 0.44) {\textbullet}; 
   \node[scale=1] at (1.5, 0.4) {$c$}; 
   \node [black] at (0,-0.01) {\textbullet}; 
    \node[scale=1] at (0.0, 1.18) {$w$}; 
    \node [black] at (0.05,1.074) {\textbullet}; 
   \node [black] at (0.253821, 0.53668) {\textbullet}; 
\end{tikzpicture}
\label{Fig-Angbis02}
}
\caption[Fuji proof]
{\subref{Fig-Angbis01}: Angle bisection visualized.
 \subref{Fig-Angbis02}: Angle bisection visualized, detail. Here   $\measuredangle (w,v,a) = \measuredangle (v,b,a)=\phi$.}
 \label{fig-vam04RL}
\end{figure}


\begin{thm} \label{anglebis} For $a,b \in  \mathbb{B}^2$ non-collinear with
$0$ and with $|a|\neq |b|,$ the circle $S(c,\sqrt{|c|^2-1})$
centered at $c= LIS[a,b, a^*, b^*]$ is orthogonal to $\partial \mathbb{B}^2.$
Let $u=S(c,\sqrt{|c|^2-1}) \cap L[a,b]$ and let
$v  \in S(c,\sqrt{|c|^2-1}) \cap \partial \mathbb{B}^2$ be the point
maximizing the angle $\measuredangle (a,z,b)$ with $z\in S(0,1)$. Then
\[
v_{ \mathbb{B}^2}(a,b)= \measuredangle(a,v,b) \quad { and}\quad
\measuredangle(a,v,u)= \measuredangle(u,v,b).
\]

\end{thm}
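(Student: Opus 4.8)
The plan is to handle the three assertions of Theorem \ref{anglebis} in turn, using Lemma \ref{myinv} for the orthogonality and then exploiting the inversion $h$ it produces for the bisection and the extremality. The orthogonality of $S(c,\sqrt{|c|^2-1})$ to $\partial\mathbb{B}^2$ is already contained in Lemma \ref{myinv}: there $h(z)=(c\bar z-1)/(\bar z-\bar c)$ is identified as the inversion in $S(c,\sqrt{|c|^2-1})$ and this circle is shown to meet the unit circle in $\{z_1,z_2\}=(1\pm i\sqrt{|c|^2-1})/\bar c$; equivalently $|c|^2=1+(\sqrt{|c|^2-1})^2$, which is exactly the orthogonality relation between $S(c,\rho)$ and $S(0,1)$. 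So nothing new is required for the first claim, and I may take $\rho=\sqrt{|c|^2-1}$ in what follows.

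For the bisection $\measuredangle(a,v,u)=\measuredangle(u,v,b)$ the key observation is that $a$ and $b$ are inverse points with respect to $S(c,\rho)$: by Lemma \ref{myinv} they are interchanged by $h$, and they lie on $L[a,b]$, which passes through the center $c=LIS[a,b,a^*,b^*]$. Hence $c,a,b$ are collinear with $a,b$ on one ray from $c$ and $|ca|\,|cb|=\rho^2$. For any $v\in S(c,\rho)$ the triangles $\triangle(c,a,v)$ and $\triangle(c,v,b)$ share the angle at $c$ and satisfy $|ca|/|cv|=|cv|/|cb|$ since $|cv|^2=\rho^2=|ca|\,|cb|$; they are therefore similar, so the ratio $|va|/|vb|=|ca|/\rho=:k$ is the same for every point of the circle. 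Thus $S(c,\rho)$ is the Apollonius circle of $a,b$ for the ratio $k$, and the intersection point named $u$, lying on the segment $[a,b]$, divides it internally in the same ratio, $|au|/|ub|=k=|va|/|vb|$. The converse of the angle-bisector theorem applied to $\triangle(a,v,b)$ with cevian $vu$ then yields $\measuredangle(a,v,u)=\measuredangle(u,v,b)$. (One must note that $u$ is the intersection point on $[a,b]$, not the external division point; this is forced by $u\in\mathbb{B}^2$, as drawn in Figure \ref{fig-vam04RL}.)

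It remains to show $\measuredangle(a,v,b)=v_{\mathbb{B}^2}(a,b)$, i.e. that $v$ globally maximizes $z\mapsto\measuredangle(a,z,b)$ on the compact set $\partial\mathbb{B}^2$. Since $a,b\in\mathbb{B}^2$ never lie on $\partial\mathbb{B}^2$, this function is smooth on the unit circle and attains its maximum at some $z_0$. The level sets $\{\measuredangle(a,z,b)=\alpha\}$ are arcs of circles through $a$ and $b$ by the inscribed-angle theorem, so vanishing of the tangential derivative at $z_0$ is equivalent to the circle $\Gamma$ through $a,b,z_0$ being tangent to $\partial\mathbb{B}^2$ at $z_0$. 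Because $\Gamma$ passes through the inverse pair $a,b$, it is mapped to itself by $h$ and hence is orthogonal to $S(c,\rho)$; the unit circle is likewise orthogonal to $S(c,\rho)$ and preserved by $h$. Consequently $h$ carries the single tangency point of $\Gamma$ and $\partial\mathbb{B}^2$ to itself, so $z_0$ is a fixed point of $h$ and therefore $z_0\in S(c,\rho)\cap\partial\mathbb{B}^2=\{z_1,z_2\}$. As $v$ is defined to be the member of $\{z_1,z_2\}$ giving the larger angle, we get $\measuredangle(a,z_0,b)=\measuredangle(a,v,b)$, which is the claimed identity and at the same time proves Theorem \ref{main1}.

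I expect the third paragraph to be the main obstacle: the step from ``critical point of the angle function'' to ``tangency of the circumscribing circle $\Gamma$'', followed by the inversion argument that pins the tangency point onto $S(c,\rho)\cap\partial\mathbb{B}^2$, is where the genuine geometric content lies. By contrast the bisection is an essentially formal consequence of recognizing the circle of inversion as the Apollonius circle of $a$ and $b$.
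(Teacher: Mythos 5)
Your proof is correct, and it takes a genuinely different---and in one respect more complete---route than the paper. Write $r=\sqrt{|c|^2-1}$. The paper's own proof is a short synthetic angle chase: with $w=c+1.5(v-c)$ it observes that $\triangle(u,c,v)$ is isosceles, applies the Alternate Segment Theorem to the circle through $a,b,v$ (whose tangent at $v$ is the line $L[c,v]$---a fact the paper leaves implicit, and which follows from the same inverse-point identity $|c-a|\,|c-b|=r^2$ that drives your argument), and finishes by comparing angle sums in $\triangle(u,b,v)$; see Figure \ref{Fig-Angbis02}. Crucially, the paper's proof establishes only the bisection identity: the extremal claim $v_{\mathbb{B}^2}(a,b)=\measuredangle(a,v,b)$ is in effect built into the choice of $v$, and the underlying fact---that the global maximizer over $S(0,1)$ lies on $S(c,r)$, equivalently that the circle through $a,b,v$ is internally tangent to the unit circle---is later cited in the proof of Theorem \ref{main1} as coming from ``the proof of Theorem \ref{anglebis}'' without ever being argued in detail. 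You handle both halves differently. For the bisection you identify $S(c,r)$ as the Apollonius circle of $a,b$ (via the similarity $\triangle(c,a,v)\sim\triangle(c,v,b)$ furnished by the inverse-point relation) and invoke the converse of the angle-bisector theorem; this yields the bisection at \emph{every} point of $S(c,r)$, not only at the maximizer, and replaces the paper's configuration-sensitive angle chase. For the extremality you supply exactly the argument the paper omits: maximality forces tangency of the circumscribing circle $\Gamma$ to $\partial\mathbb{B}^2$; $\Gamma$ is $h$-invariant because it passes through the inverse pair $a,b$, and $\partial\mathbb{B}^2$ is $h$-invariant because it is orthogonal to $S(c,r)$, so the unique tangency point is fixed by $h$ and hence lies on $S(c,r)\cap S(0,1)=\{z_1,z_2\}$. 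The paper's approach buys brevity; yours buys completeness, since it actually proves the tangency statement that Theorem \ref{main1} relies on. The only point worth flagging is that your appeal to Lemma \ref{myinv} tacitly identifies $h(z)=(c\bar z-1)/(\bar z-\bar c)$ with the inversion in $S(c,r)$; this is a one-line computation, $h(z)=c+(|c|^2-1)/(\overline{z-c})$, but it should be stated, because the ray condition, the identity $|c-a|\,|c-b|=r^2$, and the $h$-invariance of circles through $a,b$ all hang on it.
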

\begin{proof}
    Let $\ell= L[a,b]$, $u$, and $v$ be as above and $w= c+ 1.5(v-c),$ see
    Figure \ref{Fig-Angbis02}.
   It is clear
  that the triangle $ \triangle(u,c,v)$ is an isosceles triangle. Therefore,
  $\measuredangle (c,v,u)= \measuredangle (c,u,v)$ holds.
  It also follows from the
  Alternate Segment Theorem that
  $\measuredangle (w,v,a) = \measuredangle (v,b,a)=\phi$.
  Considering the sum of the inner angles of $ \triangle(u,b,v)$,
  we find that
  $\measuredangle (b,v,u) = \pi-\measuredangle (u,b,v)-\measuredangle (v,u,b)$.
  Considering also the line $L[w,c]$, we have
  $\measuredangle (a,v,u) = \pi-\measuredangle (u,b,v)-\measuredangle (v,u,b)$.
  Hence, we see that $\measuredangle (a,v,u)=\measuredangle (b,v,u)$.
\end{proof}

It should be noticed that in the above proof,  $v_{ \mathbb{B}^2}(a,b)\neq
2v_{ \mathbb{B}^2}(a,u).$



\begin{nonsec}{\bf A functional identity between $\rho_{\mathbb{B}^2}(a,b)$
and $v_{\mathbb{B}^2}(a,b)$.}
We will next prove a new formula for $v_{\mathbb{B}^2}(a,b)$ and
give first an auxiliary lemma.
\end{nonsec}

\begin{lem}\label{lem-m1 and m2 inversion}
  Let $m_1,m_2\in \mathbb{B}^2$ be non-collinear with $0$, and let
  $|m_1|\neq |m_2|$. Then, there exists an inversion
  $h:\mathbb{B}^2\rightarrow \mathbb{B}^2=h(\mathbb{B}^2)$ with
  the following properties:
  \begin{equation}\label{two pro i and ii}
    i)\quad h(m_1)=m_2;\qquad ii)\quad h(L[m_1,m_2]\cap \mathbb{B}^2)=L[m_1,m_2]\cap \mathbb{B}^2.
  \end{equation}
\end{lem}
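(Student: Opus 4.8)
The plan is to observe that this statement is Lemma~\ref{myinv} with $a,b$ renamed as $m_1,m_2$, and to produce $h$ by invoking that lemma directly. First I would verify that the hypotheses transfer: the assumption that $m_1,m_2$ are non-collinear with $0$ forces $m_1\neq 0$, $m_2\neq 0$ and $m_1\neq m_2$ (otherwise $\{0,m_1,m_2\}$ would be collinear), so $m_1,m_2\in\mathbb{B}^2\setminus\{0\}$ are two distinct points non-collinear with $0$; combined with the hypothesis $|m_1|\neq|m_2|$, these are exactly the hypotheses needed to apply Lemma~\ref{myinv}.

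Then I would set $c=LIS[m_1,m_2,m_1^*,m_2^*]$ and let $h$ be the inversion given by~\eqref{myTab} with $a=m_1$ and $b=m_2$. By Lemma~\ref{myinv}, $h$ is the reflection in the circle $S(c,\sqrt{|c|^2-1})$, which is orthogonal to $\partial\mathbb{B}^2$; hence $h$ fixes $\partial\mathbb{B}^2$ setwise, and since $h(0)=c/|c|^2\in\mathbb{B}^2$ (as $|c|>1$) it maps the interior to itself, giving $h:\mathbb{B}^2\to\mathbb{B}^2=h(\mathbb{B}^2)$. Lemma~\ref{myinv} also yields $h(m_1)=m_2$, which is property~(i), and states that $h$ maps the chord $L[m_1,m_2]\cap\mathbb{B}^2$ onto itself; since $h$ preserves $\mathbb{B}^2$ this is exactly property~(ii), namely $h(L[m_1,m_2]\cap\mathbb{B}^2)=L[m_1,m_2]\cap\mathbb{B}^2$.

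The geometric reason the center has this form, already contained in Lemma~\ref{myinv}, is worth recalling: an inversion preserves exactly those lines passing through its center, so forcing both $L[m_1,m_2]$ and (after conjugating with the reflection $z\mapsto 1/\overline{z}$ in $\partial\mathbb{B}^2$, which commutes with $h$ because the two circles are orthogonal) the line $L[m_1^*,m_2^*]$ to be fixed pins down $c=L[m_1,m_2]\cap L[m_1^*,m_2^*]$, computed by~\eqref{LIS}--\eqref{Lineint}. Because the substantive work is subsumed in Lemma~\ref{myinv} and the intersection formula, I expect no real obstacle here; the only point requiring a moment's care is reading off the nondegeneracy conditions $m_1,m_2\neq0$ and $m_1\neq m_2$ from the phrase ``non-collinear with $0$,'' which is what licenses the use of Lemma~\ref{myinv}.
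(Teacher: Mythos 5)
Your proposal is correct and follows exactly the paper's own route: the paper's proof consists of the single sentence that the result follows from Lemma~\ref{myinv}, which is precisely your invocation with $a=m_1$, $b=m_2$. Your additional checks (that non-collinearity with $0$ gives $m_1,m_2\neq 0$ and $m_1\neq m_2$, and that orthogonality of $S(c,\sqrt{|c|^2-1})$ to $\partial\mathbb{B}^2$ with $h(0)=c/|c|^2\in\mathbb{B}^2$ yields $h(\mathbb{B}^2)=\mathbb{B}^2$) simply make explicit what the paper leaves implicit.
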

\begin{proof} The proof follows from Lemma \ref{myinv}.

\end{proof}
\begin{figure}
    \centering
\begin{tikzpicture}[scale=3.5]
   \draw[thick] (0,0) circle (1 cm); 
   \node[scale=1] at (-0.9,-0.9) {$\mathbb{B}^2$}; 
   \node [black] at (0,-0.01) {\textbullet}; 
   \node[scale=1] at (-0.04,-0.1) {$0$}; 
   \node [black] at (1,-0.01) {\textbullet}; 
   \node[scale=1] at (1.1,-0.1) {$1$}; 
   \node [black] at (0.55,-0.01) {\textbullet}; 
   \node[scale=1] at (0.55,-0.1) {$a$}; 
   \node [black] at (1.81818,-0.01) {\textbullet}; 
   \node[scale=1] at (1.89818,-0.1) {$a^*$}; 
   \draw[thick,black] (0,0) -- (1.81818,0); 
   \node [black] at (0.65,0.6-0.01) {\textbullet}; 
   \node[scale=1] at (0.624,0.7) {$b$}; 
   \node [black] at (0.830671,0.766773-0.01) {\textbullet}; 
   \node[scale=1] at (0.93, 0.87) {$b^*$}; 
   \draw[thick,black] (0,0) -- (0.830671,0.766773); 
   \node [black] at (0.695313, 0.871875-0.01) {\textbullet}; 
   \node[scale=1] at (0.69, 0.99) {$c$}; 
   \draw[thick,black] (1.81818,0) -- (0.695313, 0.871875); 
   \draw[thick,black] (0.55,0) -- (0.695313, 0.871875); 
   \node [black] at (0.905141, 0.425112-0.01) {\textbullet}; 
   \draw[thick, red] (0.905141, 0.425112-0.01) ++(143:0.08cm) arc (143:229:0.07cm);
   \node[scale=1] at (1.03, 0.38) {$z_1$}; 
   \draw[thick, red] (0.55,0) -- (0.905141, 0.425112); 
   \draw[thick, red] (0.65,0.6) -- (0.905141, 0.425112); 
   \draw[thick, blue, dashed] (0.628605, 0.295233) circle (0.305518 cm); 
   \draw[thick, dashed] (0.695313, 0.871875) circle (0.493584 cm); 
   \node [black] at (0.628605, 0.295233-0.01) {\textbullet}; 
   \node [black] at (0.213, 0.963) {\textbullet}; 
   \node[scale=1] at (0.14, 1.06) {$z_2$}; 
   \draw[thick, gray] (0.213, 0.963) ++(287:0.08cm) arc (280:322:0.07cm);
   \draw[thick, gray, dashed] (0.55,0) -- (0.213, 0.963); 
   \draw[thick, gray, dashed] (0.65,0.6) -- (0.213, 0.963); 
\end{tikzpicture}
    \caption{If $c= LIS[a,b,a^*,b^*]$
    and $z_1=S(c,\sqrt{|c|^2-1}) \cap S(0,1)$ is a point in the sector
    with vertex $c$ and sides $L[c,a]$ and $L[c, a^*],$
    then $v_{\B}(a,b) = \measuredangle(a,z_1,b).$
}\label{Fig11}
\end{figure}
\begin{nonsec}{\bf Proof of Theorem \ref{main1}.}
Consider first the case $|a|\neq|b|$ with $0\notin L[a,b].$
The inversion $h$ in Lemma \ref{myinv} satisfies $h(a)= b$ and hence
the triangles $ \triangle(c,a,d)$ and  $ \triangle(c,b,d)$ are similar where $d$ is as in
Figure \ref{Fig11}. By the proof of Theorem \ref{anglebis} the
circle through $a,d,b$ is internally tangent to the unit circle and
 $v_{\B}(a,b)= \measuredangle(a,d,b).$ The formula for the points
$z_1, z_2$ is given in Lemma \ref{myinv}.
In the case  $|a|=|b|,$ the formula follows easily by symmetry.
 \hfill $\square$
\end{nonsec}

\begin{nonsec}{\bf Remark.}\label{z1z2formula}
The two points $z_1$ and $z_2$ in Theorem \ref{main1} are given as solutions
to the equation:
\[
(\bar{a}{ \cdot }(1-|b|^2)-\bar{b}{ \cdot }(1-|a|^2){ \cdot }z^2
   -2{ \cdot }(|a|^2-|b|^2){ \cdot }z+a{ \cdot }(1-|b|^2)-b{ \cdot }(1-|a|^2)=0.
   \]
\end{nonsec}

From Theorem \ref{main1},
substituting $ c=({a(1-|b|^2)- b(1-|a|^2)})/({|a|^2-|b|^2})$,
$ r^2=|c|^2-1 $ into the equation $ |z-c|=r $
of the circle with center $ c $ and radius $ r $, we have
\begin{align*}
  &(z-c)(\overline{z}-\overline{c})-r^2\\
    & = \dfrac{(|a|^2-|b|^2)z\overline{z}
        -(\overline{a}(1-|b|^2)-\overline{b}(1-|a|^2))z
        -(a(1-|b|^2)-b(1-|a|^2))\overline{z}+|a|^2-|b|^2}{|a|^2-|b|^2}\\
    &=0.
\end{align*}
Therefore, if $|a|\neq|b| $, the following holds:
\[
    (|a|^2-|b|^2)z\overline{z}
        -(\overline{a}(1-|b|^2)-\overline{b}(1-|a|^2))z
        -(a(1-|b|^2)-b(1-|a|^2))\overline{z}+|a|^2-|b|^2=0.
\]
Substituting $ \overline{z}=1/z $ into the above equality,
the intersection of the unit circle $ S(0,1) $ and the circle
$ S(c,\sqrt{|c|^2-1}) $ is obtained as the solution of the following equation
\begin{equation}\label{eq:v}
  (\overline{a}(1-|b|^2)-\overline{b}(1-|a|^2))z^2
     -2(|a|^2-|b|^2)z+a(1-|b|^2)-b(1-|a|^2)=0.
\end{equation}

\bigskip

On the other hand, we consider the line $ L$ and a circle $C$
both passing through points $a$ and $b$.
The circle $C$ is divided into two arcs by $L$.
On each arc, as $z$ ranges over each arc,
the angle $ \measuredangle(a,z,b) $ takes a constant value
from the inscribed angle theorem.
The larger the radius of the circle $C$,
the smaller the angle $ \measuredangle(a,z,b) $.
There exist two circles that have chord $ [a,b] $ and
are inscribed in the unit circle.
Let $\widetilde{C} $ be the smaller radius of them.
The intersection point $ q $ of $ \widetilde{C} $ and the unit circle
gives the visual angle metric for $ a,b $,
i.e. $ v_{\mathbb{B}^2}(a,b)= \measuredangle(a,q,b) $.
In fact, for $ c_0=m(a,b,p) $, the circle $ \widetilde{C} $ is written as
$ |z-c_0|=|q-c_0| $.
Since $ q $ is a point on the unit circle, $ q\overline{q}=1 $ holds.
Therefore, we have $ |z-c_0|^2-|q-c_0|^2=P/Q=0 $, where
\begin{align*}
   P&=((\overline{a}-\overline{b})q^2
       +(\overline{b}a-\overline{a}b)q-a+b)z\overline{z}
      +(-(\overline{a}\overline{b}(a-b)+\overline{a}
       -\overline{b})q+\overline{a}a-\overline{b}b)z \\
    & \quad  +((-\overline{a}a+\overline{b}b)q^2
      +((\overline{a}-\overline{b})ab+a-b)q)\overline{z}
      +\overline{b}\overline{a}(a-b)q^2
    +(-\overline{b}a+\overline{a}b)q+(-\overline{a}+\overline{b})ba,
\end{align*}
and
\begin{equation*}
  Q=(\overline{a}-\overline{b})q^2+(\overline{b}a-\overline{a}b)q-a+b.
\end{equation*}
The intersection points of $\widetilde{C} $ and the unit circle
are obtained as solutions to the equation substituting $ \overline{z}=1/z $
into $ P=0 $. Then, we have,
\[
   (z-q)\big(((\overline{a}\overline{b}(a-b)+\overline{a}-\overline{b})q
          -|a|^2+|b|^2)z+(-|a|^2+|b|^2)q+(\overline{a}-\overline{b})ab
        +a-b\big)=0.
\]
As $ q $ is the point of tangency, the above equation has a
double root $ z=q $.
Hence, we have
\[
  (\overline{a}(1-|b|^2)-\overline{b}(1-|a|^2))q^2
   -2(|a|^2-|b|^2)q+a(1-|b|^2)-b(1-|a|^2)=0.
\]
This equation is equivalent to \eqref{eq:v}.

\begin{nonsec}{\bf Proof of Theorem \ref{main2}.}
If $a,b\in \mathbb{B}^2$ are collinear with $0$,
the proof follows from Lemma  \ref{radial}. First, we consider the case
$|a|=|b|$, and denote $t=|a-b|/2$. Then $m=(a+b)/2,$  $s=|m|>0$ and
\begin{equation}\label{vB(a,b)}
    v_{\mathbb{B} ^2}(a,b)=2\,{\rm arctan }\frac{t}{1-s}\Leftrightarrow
     t=(1-s)\tan \frac{v_{\mathbb{B} ^2}(a,b)}{2}.
  \end{equation}

 Therefore, by \cite[p. 40]{b} and \eqref{vB(a,b)} we obtain
  \begin{equation*}
    {\rm sh} \frac{\rho_{\mathbb{B} ^2}(a,b)}{2}=\frac{2t}{1-s^2-t^2}=
\frac{2(1-s)\tan ({v_{\mathbb{B} ^2}(a,b)}/{2})}{1-s^2-(1-s)^2 \tan^2
({v_{\mathbb{B} ^2}(a,b)}/{2})}.
  \end{equation*}
Solving this equation for  $ \tan
(v_{\mathbb{B}^2}(a,b)/2)$ yields the desired result in this case.

Consider now the case $|a|\neq |b|$.
Observe that the formula for $m$ follows
from \eqref{myrefl}. Let {\tt hmid} be a point on $[a,b]$ with
\begin{equation*}
  \rho_{\mathbb{B}^2}(a,{\tt hmid})=\rho_{\mathbb{B}^2}({\tt hmid},b).
\end{equation*}

We shall apply Lemma
\ref{lem-m1 and m2 inversion} with $m_1={\tt hmid}$ and $m_2=m$ to
find an inversion $h:\mathbb{B}^2\rightarrow\mathbb{B}^2=h(\mathbb{B}^2)$.
 By Proposition
\ref{myprop} we also know that
$v_{\mathbb{B}^2}(a,b)=v_{\mathbb{B}^2}(h(a),h(b))$.
In conclusion,  because $|h(a)|=|h(b)|,$ this inversion reduces
the general case to the special case proved above.
The proof is now complete.\hfill $\square$
\end{nonsec}

\begin{figure}
    \centering
\begin{tikzpicture}[scale=3.5]
   \draw[thick] (0, 0) ++(0:1cm) arc (0:180:1cm);
   \draw[thick, dotted] (0.822336, 0.225248) circle (0.147373 cm); 
   \draw[thick, dotted] (0.0689818, 0.551855) circle (0.443851 cm); 
   \draw[thick] (1.19681, 0.0628984) ++(0:0.660543cm) arc (0:180:0.660543cm);
   \draw[thick, red] (-0.25058, 0.243822) -- (0.124035, 0.992278); 
   \draw[thick, red] (0.302887, 0.174639) -- (0.124035, 0.992278); 
   \draw[thick, red] (0.9,0.1) -- (0.964473, 0.264181); 
   \draw[thick, red] (0.716231, 0.122971) -- (0.964473, 0.264181); 
   \draw[thick] (-1, 0) -- (1.7, 0); 
   \draw[thick] (-0.943814, 0.330477) -- (1.7, 0); 
   \draw[thick, blue] (1.19681, 0.0628984) -- (0.0689818, 0.551855);
   \node [black, red] at (0.9,0.09) {\textbullet}; 
   \node[scale=1] at (0.9,0.045) {$a$}; 
   \node [black, blue] at (0.964473, 0.254181) {\textbullet}; 
   \node[scale=1] at (1.03, 0.31) {$v$}; 
   \node [black] at (0.716231, 0.112971) {\textbullet}; 
   \node[scale=1] at (0.67, 0.07) {$b$}; 
   \node [black, red] at (-0.25058, 0.233822) {\textbullet}; 
   \node[scale=1] at (-0.33, 0.15) {$h(a)$}; 
   \node [blue] at (0.124035, 0.982278) {\textbullet}; 
   \node[scale=1] at (0.12, 1.1) {$h(v)$}; 
   \node [black] at (0.302887, 0.164639) {\textbullet}; 
   \node[scale=1] at (0.35, 0.075) {$h(b)$}; 
   \node [black] at (0,-0.01) {\textbullet}; 
   \node[scale=1] at (-0.04, -0.1) {$0$}; 
   \node [black] at (1.7,-0.01) {\textbullet}; 
   \node[scale=1] at (1.75, -0.1) {$c$}; 
   \node [black] at (0.822336, 0.215248) {\textbullet}; 
   \node[scale=1] at (0.82, 0.31) {$d$}; 
   \node [black] at (0.0689818, 0.541855) {\textbullet}; 
   \node[scale=0.93] at (0.08, 0.65) {$h(d)$}; 
   \node [black] at (1.19681, 0.0528984) {\textbullet}; %
   \node [black] at (0.0261538, 0.199231) {\textbullet}; 
   \node[scale=1] at (0.02, 0.3) {$m$}; 
   \node [black] at (1, -0.01) {\textbullet}; 
   \node[scale=1] at (1, -0.1) {$1$}; 
   \node [black] at (0.829836, 0.09877) {\textbullet}; 
\end{tikzpicture}    \caption{The idea of the proof of Theorem \ref{main2} visualized.
    The key points are that the triangles
    $ \triangle(b,a,d)$ and  $ \triangle(h(a),h(b),h(d))$ are similar,
    and hence the angles $\measuredangle(h(b),h(v),h(a))$ and
    $\measuredangle(a,v,b)$
    are equal.
    }\label{Fig07}
\end{figure}


\medskip

Observe that in the case when $a$, $b$, and $0$ are collinear, Theorem \ref{main2}
reduces to Lemma  \ref{radial} and that Corollary \ref{vamSin} (2) gives
an equivalent form of Lemma  \ref{radial}.

\begin{cor} \label{vamSin} (1)  For $a,b\in \mathbb{B}^2$ and $m$ as
 in Theorem \ref{main2}, we have
  \begin{equation}\label{formula for vam}
    \sin v_{\mathbb{B}^2}(a,b)=
    \frac{(1+|m|)(1+\sqrt{1+(1-|m|^2)u^2})u}{1+\sqrt{1+(1-|m|^2)u^2}+(1+|m|)u^2},
    \quad u={\rm sh} \frac{\rho_{\mathbb{B}^2}(a,b)}{2}.
  \end{equation}
(2) For $m=0$ we have
\[
 \sin v_{\mathbb{B}^2}(a,b)=
    {\rm th} \frac{\rho_{\mathbb{B}^2}(a,b)}{2}= \frac{|a-b|}{\sqrt{|a-b|^2+(1-|a|^2)(1-|b|^2)}}.
\]
 \end{cor}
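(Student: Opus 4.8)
The plan is to derive both formulas directly from Theorem \ref{main2} by converting the half-angle tangent into a sine via the elementary identity $\sin\theta = 2\tan(\theta/2)/(1+\tan^2(\theta/2))$. To keep the bookkeeping light I would abbreviate $\mu=|m|$ and $w=\sqrt{1+(1-\mu^2)u^2}$, so that Theorem \ref{main2} reads $\tan(v_{\mathbb{B}^2}(a,b)/2)=(1+\mu)u/(1+w)$, where $u={\rm sh}(\rho_{\mathbb{B}^2}(a,b)/2)$.

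First I would substitute $t=(1+\mu)u/(1+w)$ into $\sin v_{\mathbb{B}^2}(a,b)=2t/(1+t^2)$ and clear the common denominator $(1+w)^2$, obtaining
\[
\sin v_{\mathbb{B}^2}(a,b)=\frac{2(1+\mu)u(1+w)}{(1+w)^2+(1+\mu)^2u^2}.
\]
The crucial algebraic step is to simplify this denominator. Using $w^2=1+(1-\mu^2)u^2=1+(1-\mu)(1+\mu)u^2$, I would combine $w^2+(1+\mu)^2u^2=1+2(1+\mu)u^2$, whence $(1+w)^2+(1+\mu)^2u^2=2\bigl[1+w+(1+\mu)u^2\bigr]$. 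The factor $2$ cancels against the numerator and yields exactly the asserted formula in part (1).

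For part (2) I would set $\mu=0$, so that $w=\sqrt{1+u^2}$ and $1+u^2=w^2$. Then the denominator of the formula from part (1) becomes $1+w+u^2=w^2+w=w(1+w)$, the factor $1+w$ cancels, and $\sin v_{\mathbb{B}^2}(a,b)=u/w=u/\sqrt{1+u^2}={\rm th}(\rho_{\mathbb{B}^2}(a,b)/2)$. The final closed form in $a,b$ follows by inserting $u={\rm sh}(\rho_{\mathbb{B}^2}(a,b)/2)=|a-b|/\sqrt{(1-|a|^2)(1-|b|^2)}$ from \eqref{myrho} and clearing the radical.

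The computation is entirely routine; the only point requiring care --- and the one place where an error would propagate --- is the denominator identity $w^2+(1+\mu)^2u^2=1+2(1+\mu)u^2$, which is precisely what makes the whole expression collapse so cleanly. Thus the ``main obstacle'' here is really just disciplined algebra rather than any conceptual difficulty, and part (2) is a direct specialization of part (1).
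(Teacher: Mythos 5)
Your proposal is correct and follows exactly the route the paper intends: the paper's proof of Corollary \ref{vamSin} simply states that both parts follow from Theorem \ref{main2} by ``simple manipulations,'' and your computation --- substituting $\tan(v/2)=(1+\mu)u/(1+w)$ into $\sin v = 2\tan(v/2)/(1+\tan^2(v/2))$ and collapsing the denominator via $w^2=1+(1-\mu)(1+\mu)u^2$ --- is precisely that manipulation, carried out in full. Part (2) is likewise the straightforward specialization $\mu=0$, matching the paper.
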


\begin{proof}
Both (1) and (2) follow from  Theorem \ref{main2} by simple manipulations.
\end{proof}

%

 \medskip

\begin{nonsec}\label{myProp2} {\bf Proposition. }{\it For $m \in (0,1)$,
$r >0$, and $u= {\rm sh}(r/2)$}
\[
(1+m)\, {\rm th}\frac{r}{4}\le
\frac{(1+m)\, u}{1+\sqrt{1+(1-m^2)\,u^2}} \le
\min \left\{ \frac{(1+m) u}{2},\sqrt{\frac{1+m}{1-m}}\,{\rm th} \frac{r}{4} \right\}.
\]
\end{nonsec}

\begin{proof} Observing $ {\rm th}(r/4)= u/(1+ \sqrt{1+u^2})$ and writing  $v$ for the middle term we have
\[
(1+m)\, {\rm th}\frac{r}{4}=\frac{(1+m)u}{1+\sqrt{1+u^2}} \le v \le
 \frac{(1+m)u}{\sqrt{1-m^2}(1+\sqrt{1+u^2})}=\sqrt{\frac{1+m}{1-m}}
 \,{\rm th}\frac{r}{4},
\]
and trivially $v \le (1+m)u/2.$
\end{proof}

The next corollary yields, as a special case, Theorem 3.11 of \cite{klvw}.

\begin{cor} \label{KLVW311}   For $a,b\in \mathbb{B}^2$ and, $m$ as
 in Theorem \ref{main2}, we have
  \begin{equation*} 
  (1+|m|) \th \frac{\rho_{\mathbb{B}^2}(a,b)}{4}
  \le \tan \frac{v_{\mathbb{B}^2}(a,b)}{2}
    \le \min \left\{ \frac{1+|m|}{2}\,{\rm sh} \frac{\rho_{\mathbb{B}^2}(a,b)}{2},
    \sqrt{\frac{1+|m|}{1-|m|}}
 \, \th \frac{\rho_{\mathbb{B}^2}(a,b)}{4} \right\}.
  \end{equation*}

 \end{cor}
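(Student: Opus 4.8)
The plan is to read the corollary off directly from the exact formula of Theorem \ref{main2} combined with the two-sided estimate of Proposition \ref{myProp2}. Fix $a,b\in\mathbb{B}^2$, write $r=\rho_{\mathbb{B}^2}(a,b)$ and $u=\sh(r/2)$, and let $m$ be the chord midpoint as in Theorem \ref{main2}. That theorem identifies $\tan(v_{\mathbb{B}^2}(a,b)/2)$ \emph{precisely} with the middle expression
\[
\frac{(1+|m|)\,u}{1+\sqrt{1+(1-|m|^2)u^2}}
\]
of Proposition \ref{myProp2}, the scalar $m$ there being played by $|m|$. Hence the corollary is nothing but Proposition \ref{myProp2} transcribed through this identification: the left bound $(1+|m|)\th(r/4)$ and the right bound $\min\{(1+|m|)u/2,\ \sqrt{(1+|m|)/(1-|m|)}\,\th(r/4)\}$ match the two ends of the chain verbatim, once one recalls $u=\sh(r/2)$ and the elementary half-argument identity $\th(r/4)=u/(1+\sqrt{1+u^2})$ on which Proposition \ref{myProp2} itself rests.

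First I would treat the generic case $|m|\in(0,1)$, where nothing is required beyond quoting the two displayed results and substituting. The only genuinely separate point is the degenerate case $|m|=0$, which falls outside the hypothesis $m\in(0,1)$ of Proposition \ref{myProp2}. In that case $a$, $b$, and $0$ are collinear, so Theorem \ref{main2} (equivalently Lemma \ref{radial}) gives $\tan(v_{\mathbb{B}^2}(a,b)/2)=\th(r/4)$; the asserted left inequality becomes the equality $\th(r/4)\le\th(r/4)$, and the right inequality becomes $\th(r/4)\le\min\{u/2,\th(r/4)\}$, which holds because $\th(r/4)=u/(1+\sqrt{1+u^2})\le u/2$ for $u>0$ (indeed $1+\sqrt{1+u^2}\ge 2$). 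Thus the whole chain collapses to equalities at $|m|=0$ and the bounds remain valid.

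I do not anticipate a real obstacle here: the quantitative content has been front-loaded into Proposition \ref{myProp2}, and the corollary is a clean application of it to the formula of Theorem \ref{main2}. If anything merits care, it is confirming consistency at the boundary $|m|=0$; alternatively one may argue by continuity, since every term in the displayed chain depends continuously on $|m|\in[0,1)$, so the non-strict inequalities proved for $|m|\in(0,1)$ persist in the limit $|m|\to 0^+$.
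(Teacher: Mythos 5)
Your proof is correct and is essentially the paper's own argument: the corollary is obtained by substituting the exact formula of Theorem \ref{main2} into the two-sided estimate of Proposition \ref{myProp2} with $m$ there read as $|m|$. Your extra care at the boundary case $|m|=0$ (outside the proposition's hypothesis $m\in(0,1)$, handled via Lemma \ref{radial} or by continuity) is a detail the paper leaves implicit, but it does not change the route.
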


\begin{proof} The proof follows from Theorem \ref{main2} and Proposition
\ref{myProp2}.
\end{proof}

For the proof of Theorem \ref{main3} we need some basic facts about quasiregular
mappings, see \cite{avv,hkv}. In particular, we use the quasiregular Schwarz lemma
in the following form with detailed information about the distortion function $\varphi_K$. 
For $r\in(0,1)$ and $K\in[1,\infty)$ the function $\varphi_K [0,1]\rightarrow [0,1]$ is defined by
\begin{equation*}
    \varphi_K(r)=\mu^{-1}(\mu(r)/K),
    \quad (\varphi_K(0)=0, \varphi_K(1)=1),
\end{equation*}
where $\mu:(0,1)\rightarrow(0,\infty)$ is the decreasing homeomorphism defined by
\begin{equation*}
    \mu(r)=\frac{\pi}{2}\frac{\K(\sqrt{1-r^2})}{\K(r)};\qquad \K(r)=\frac{\pi}{2}F(1/2,1/2;1;r^2),
\end{equation*}
and $F$ is the Gaussian hypergeometric function.

\begin{lem} \label{SLem} (1) Let $f:\mathbb{B}^2 \to \mathbb{B}^2$ be a $K$-quasiregular
mapping, where $K\geq1$, and $a,b \in \mathbb{B}^2.$ Then
\[
{\rm th} \frac{\rho_{\mathbb{B}^2}(f(a),f(b))}{2} \le
\varphi_K\left({\rm th} \frac{\rho_{\mathbb{B}^2}(a,b)}{2} \right)
\le 4^{1-1/K} \left({\rm th} \frac{\rho_{\mathbb{B}^2}(a,b)}{2} \right)^{1/K}.
\]
(2) The function $\varphi_K, K\ge 1,$ satisfies for $0<r<1$
\[
\frac{\varphi_K(r)}{1+ \sqrt{1-\varphi_K(r)^2 }}
 =\sqrt{\varphi_K\left(\left(\frac{r}{1+\sqrt{1-r^2}}\right)^2\right)}.
\]

\end{lem}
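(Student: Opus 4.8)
The plan is to treat the two parts by essentially independent arguments, since each rests on a different ingredient. For part (1), I would recognize the first inequality as the hyperbolic form of the Schwarz lemma for quasiregular mappings: if $f:\B\to\B$ is $K$-quasiregular, then $\th(\rho_{\B}(f(a),f(b))/2)\le\varphi_K(\th(\rho_{\B}(a,b)/2))$. This is a standard result that I would simply quote from \cite{avv} or \cite{hkv}, as the modulus function $\mu$ and its distortion $\varphi_K$ are constructed precisely so that this sharp estimate holds. The second inequality is then the purely special-function bound $\varphi_K(r)\le 4^{1-1/K}r^{1/K}$, valid for $0<r<1$, which is again classical (see \cite{avv}); chaining the two inequalities yields the claim. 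Thus part (1) needs no computation, only the identification of the correct known results and their monotone composition.

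For part (2) the crux is a Landen-type functional identity for $\mu$, namely
\[
\mu\!\left(\frac{1-r'}{1+r'}\right)=2\mu(r),\qquad r'=\sqrt{1-r^2},
\]
which follows from the descending Landen relation $\mu\bigl(2\sqrt{t}/(1+t)\bigr)=\tfrac12\mu(t)$ by setting $t=(1-r')/(1+r')$ and checking that $2\sqrt{t}/(1+t)=r$. I would also record the algebraic simplification
\[
\left(\frac{r}{1+\sqrt{1-r^2}}\right)^2=\frac{r^2}{(1+r')^2}=\frac{1-r'}{1+r'},
\]
so that the argument appearing inside $\varphi_K$ on the right-hand side of the asserted identity is exactly $(1-r')/(1+r')$.

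Writing $s=\varphi_K(r)$ and $s'=\sqrt{1-s^2}$, the same algebra gives $\bigl(s/(1+s')\bigr)^2=(1-s')/(1+s')$, so after squaring both sides the identity reduces to showing
\[
\varphi_K\!\left(\frac{1-r'}{1+r'}\right)=\frac{1-s'}{1+s'}.
\]
I would verify this by applying the injective map $\mu$ to both sides: using the displayed Landen identity together with $\mu(\varphi_K(x))=\mu(x)/K$, the left side has $\mu$-value $\tfrac1K\mu\bigl((1-r')/(1+r')\bigr)=\tfrac2K\mu(r)$, while the right side has $\mu$-value $2\mu(s)=2\mu(\varphi_K(r))=\tfrac2K\mu(r)$. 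Equality of $\mu$-values forces equality of the arguments, since $\mu$ is a strictly decreasing homeomorphism. Finally, because both sides of the original assertion are positive, taking square roots completes the proof.

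The main obstacle is essentially bookkeeping: correctly locating and invoking the Landen identity for $\mu$ in the right normalization, and keeping the two half-argument simplifications (one for $r$, one for $s=\varphi_K(r)$) straight so that the $\mu$-values on the two sides genuinely match. Once the identity $\mu\bigl((1-r')/(1+r')\bigr)=2\mu(r)$ is in hand, everything else is forced by the injectivity of $\mu$ and by the defining relation $\mu\circ\varphi_K=\mu/K$.
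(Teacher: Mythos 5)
Your proposal is correct. For part (1) you do exactly what the paper does: the paper's entire proof of (1) is the citation \cite[Thm 16.2(1)]{hkv}, and the bound $\varphi_K(r)\le 4^{1-1/K}r^{1/K}$ you chain onto the quasiregular Schwarz lemma is part of that same standard circle of results, so there is nothing substantive to compare. For part (2), however, you take a genuinely different route: the paper again gives only a citation, \cite[Theorem 10.5]{avv}, whereas you derive the identity from first principles. Your derivation is sound. The algebraic observation $\bigl(r/(1+r')\bigr)^2=(1-r')/(1+r')$ with $r'=\sqrt{1-r^2}$ (and likewise for $s=\varphi_K(r)$, $s'=\sqrt{1-s^2}$) reduces the claim, after squaring both nonnegative sides, to $\varphi_K\bigl((1-r')/(1+r')\bigr)=(1-s')/(1+s')$; applying the strictly decreasing homeomorphism $\mu$ to both sides and using the Landen duplication $\mu\bigl(2\sqrt{t}/(1+t)\bigr)=\mu(t)/2$, equivalently $\mu\bigl((1-r')/(1+r')\bigr)=2\mu(r)$, together with the defining relation $\mu(\varphi_K(x))=\mu(x)/K$, yields the common value $2\mu(r)/K$ on both sides, and injectivity of $\mu$ finishes the argument. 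What your approach buys is self-containedness: the identity in (2) is exposed as an immediate consequence of the Landen transformation and the definition of $\varphi_K$, rather than being a black box; the cost is that you must import the Landen identity for $\mu$ in the correct normalization (classical, found in \cite{avv}), which is exactly the bookkeeping risk you flag yourself.
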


\begin{proof} (1) See \cite[Thm 16.2(1)]{hkv}.

\noindent
(2) See \cite[Theorem 10.5]{avv}.
\end{proof}

\begin{nonsec}{\bf Proof of Theorem \ref{main3}.}
Write $\rho'= \rho_{\mathbb{B}^2}(f(a),f(b))$, and $\rho= \rho_{\mathbb{B}^2}(a,b)$.
By Theorem \ref{main2},
Proposition \ref{myProp2} with $M=  \sqrt{({1+|m_1|})/({1-|m_1|})}$ and Lemma \ref{SLem}
\begin{equation*}
       \tan \frac{v_{\mathbb{B}^2}(f(a),f(b))}{2}\le
   M\,{\rm th}\frac{ \rho'}{4}
 =\frac{M
 \,{\rm th}\frac{\rho'}{2}}
 {1+ \sqrt{1-{\rm th}^2\frac{\rho'}{2}}}
  \le
\frac{M \,\varphi_K({\rm th}\frac{\rho}{2})}
 {1+ \sqrt{1-\varphi_K({\rm th}\frac{\rho}{2})^2}}=  M \,\sqrt{\varphi_K\left({\rm th}^2\frac{\rho}{4}\right)}\,
\end{equation*}
\begin{equation*}
\le  M
 \,\sqrt{\varphi_K\left(\left( \frac{1}{1+|m_2|}{\rm tan}\frac{v_{\mathbb{B}^2}(a,b)}{2}\right)^2\right)}
\le \frac{ M  2^{1-1/K}}{(1+|m_2|)^{1/K}} \left(
{\rm tan}\frac{v_{\mathbb{B}^2}(a,b)}{2} \right)^{1/K},
\end{equation*}
where in the second equality we have used this fact that if $r={\th} (\rho/2)$, then $r/(1+\sqrt{1-r^2})={\th} (\rho/4)$. $\hfill \square$
\end{nonsec}


\begin{cor} \label{vamMob} Let $T_w: {\mathbb{B}^2} \to {\mathbb{B}^2}$
be a M\"obius transformation as in \eqref{myTa}, let
  $a,b,w \in \mathbb{B}^2$, $|m_2|=d(L[a,b],0)$, and
  $[m_1|= d(L[T_w(a),T_w(b)],0).$
   Then
   \[
    \tan \frac{v_{\mathbb{B}^2}(T_w(a),T_w(b))}{2}\le c(m_1,m_2)
     \,\tan \,\frac{v_{\mathbb{B}^2}(a,b)}{2},\quad
     c(m_1,m_2)=\frac{1}{{1+|m_2|}} \sqrt{\frac{1+|m_1|}{1-|m_1|}},
   \]
with equality for $m_1=m_2=0.$
\end{cor}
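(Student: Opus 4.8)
The plan is to deduce Corollary \ref{vamMob} as the conformal, i.e.\ $K=1$, instance of Theorem \ref{main3}. The M\"obius transformation $T_w$ of \eqref{myTa} maps $\mathbb{B}^2$ onto itself conformally, and a conformal map is $1$-quasiregular; thus $T_w$ is a non-constant $K$-quasiregular self-map of $\mathbb{B}^2$ with $K=1$, and Theorem \ref{main3} applies with $f=T_w$.

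First I would match the notation. In Theorem \ref{main3}, $m_1$ and $m_2$ are the midpoints of the chords of the unit disk containing $f(a),f(b)$ and $a,b$, respectively; by the geometric meaning of $m$ recorded in Theorem \ref{main2}, one has $|m_1|=d(L[T_w(a),T_w(b)],0)$ and $|m_2|=d(L[a,b],0)$, which are exactly the quantities $|m_1|,|m_2|$ fixed in the corollary. Then I would specialize the conclusion of Theorem \ref{main3} to $K=1$: the prefactor $2^{1-1/K}$ equals $2^{0}=1$, the exponent $1/K$ equals $1$, and
\[
c=\sqrt{\frac{1+|m_1|}{1-|m_1|}}\,\frac{1}{(1+|m_2|)^{1/K}}
\]
collapses to $c(m_1,m_2)=\frac{1}{1+|m_2|}\sqrt{\frac{1+|m_1|}{1-|m_1|}}$. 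Hence the estimate of Theorem \ref{main3} becomes precisely the asserted inequality, and the equality clause of Theorem \ref{main3}, valid for $K=1$ and $m_1=m_2=0$, yields the equality clause of the corollary.

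An equivalent self-contained route, bypassing the quasiregular machinery, would use Theorem \ref{main2} together with Proposition \ref{myProp2}. Since $T_w$ is a M\"obius automorphism, the hyperbolic metric is invariant, so $\rho_{\mathbb{B}^2}(T_w(a),T_w(b))=\rho_{\mathbb{B}^2}(a,b)$ and both sides of the inequality are governed by the same $u={\rm sh}(\rho_{\mathbb{B}^2}(a,b)/2)$. Applying the upper bound of Proposition \ref{myProp2} with $m=|m_1|$ to $\tan (v_{\mathbb{B}^2}(T_w(a),T_w(b))/2)$ and its lower bound with $m=|m_2|$ to $\tan (v_{\mathbb{B}^2}(a,b)/2)$, one isolates the common factor ${\rm th}(\rho_{\mathbb{B}^2}(a,b)/4)$ and obtains the stated constant after a one-line combination.

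Since the corollary is a direct specialization, there is no genuine obstacle. The only care needed is the bookkeeping above: verifying that the $m_1,m_2$ of Theorem \ref{main3} are the distances named in the corollary, and confirming that setting $K=1$ turns $c$ into $c(m_1,m_2)$.
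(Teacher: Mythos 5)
Your proposal is correct and takes essentially the same approach as the paper: the paper's proof of Corollary \ref{vamMob} is exactly the specialization of Theorem \ref{main3} to $K=1$ (with the same identification of $|m_1|$, $|m_2|$ as chord-midpoint distances and the same collapse of the constant $c$ to $c(m_1,m_2)$), which is your primary route. Your alternative route via Theorem \ref{main2}, Proposition \ref{myProp2}, and M\"obius invariance of $\rho_{\mathbb{B}^2}$ is also valid, but it is really just an inlining of the proof of Theorem \ref{main3} in the conformal case, where the quasiregular Schwarz lemma step degenerates to $\rho_{\mathbb{B}^2}(T_w(a),T_w(b))=\rho_{\mathbb{B}^2}(a,b)$.
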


\begin{proof} The proof follows from Theorem \ref{main3}.
\end{proof}

We next consider an evenly separated sequence of collinear points.

\begin{figure}
    \centering
\begin{tikzpicture}[scale=3.5]
   \draw[thick] (0, 0) circle (1 cm); 
   \node [black] at (0,-0.01) {\textbullet}; 
   \node[scale=1] at (-0.04, -0.1) {$0$}; 
   \node[scale=1.1] at (-0.9,-0.9) {$\mathbb{B}^2$}; 
   \node [black] at (1.7,-0.01) {\textbullet}; 
   \node[scale=1] at (1.74, -0.1) {$c$}; 
   \draw[thick] (-1,0) -- (1.7,0); %
   \draw[thick] (1.7,0) -- (-0.7,0.3);
   \node [black] at (0.9, 0.09) {\textbullet}; 
   \node [black] at (0.715752, 0.113031) {\textbullet}; 
   \node [black] at (0.309796, 0.163775) {\textbullet}; 
   \node [black] at (-0.244533, 0.233067) {\textbullet}; 
   \node [black] at (-0.656395, 0.284549) {\textbullet}; 
   \node [black] at (-0.845, 0.308125) {\textbullet}; 
   \node [black] at (-0.912209, 0.316526) {\textbullet}; 
   \node [black] at (-0.93395, 0.319244) {\textbullet}; 
   \node [blue] at (0.964427, 0.25435) {\textbullet}; 
   \node [blue] at (0.806286, 0.581526) {\textbullet}; 
   \node [blue] at (0.135677, 0.980753) {\textbullet}; 
   \node [blue] at (-0.624752, 0.770823) {\textbullet}; 
   \node [blue] at (-0.867122, 0.488095) {\textbullet}; 
   \node [blue] at (-0.923106, 0.374546) {\textbullet}; 
   \draw[thick, red] (0.9, 0.09) -- (0.964427, 0.26435); 
   \draw[thick, red] (0.715752, 0.123031) -- (0.964427, 0.26435); 
   \draw[thick, red] (0.715752, 0.123031) -- (0.806286, 0.591526); 
   \draw[thick, red] (0.309796, 0.173775) -- (0.806286, 0.591526); 
   \draw[thick, red] (0.309796, 0.173775) -- (0.135677, 0.990753); 
   \draw[thick, red] (-0.244533, 0.243067) -- (0.135677, 0.990753); 
   \draw[thick, red] (-0.244533, 0.243067) -- (-0.624752, 0.780823); 
   \draw[thick, red] (-0.656395, 0.294549) -- (-0.624752, 0.780823); 
   \draw[thick, red] (-0.656395, 0.294549) -- (-0.867122, 0.498095); 
   \draw[thick, red] (-0.845, 0.318125) -- (-0.867122, 0.498095); 
   \draw[thick, red] (-0.845, 0.318125) -- (-0.923106, 0.384546); 
   \draw[thick, red] (-0.912209, 0.326526) -- (-0.923106, 0.384546); 
   \draw[thick] (0.822104, 0.225339) circle (0.147572 cm); 
   \draw[thick] (0.544123, 0.399192) circle (0.325149 cm); 
   \draw[thick] (0.0754379, 0.550871) circle (0.443988 cm); 
   \draw[thick] (-0.418659, 0.523246) circle (0.329879 cm); 
   \draw[thick] (-0.736133, 0.422852) circle (0.151062 cm); 
   \draw[thick] (-0.873414, 0.363846) circle (0.0538309 cm); 
\end{tikzpicture}
\caption{Here  $\rho_{\mathbb{B}^2}(a_j,a_{j+1})={\rm const}$
    and also $v_{\mathbb{B}^2}(a_j,a_{j+1})$ is a constant.
    It follows from Proposition \ref{myprop} that the sectors with vertices
on the unit circle have equal angles.
   }\label{Fig07a}
\end{figure}
\begin{nonsec}{\bf Evenly separated collinear points.}
Consider a collinear sequence of points $a_j, j=1,2,3,\dots$ in the unit
disk with a constant hyperbolic distance $\rho_{\mathbb{B}^2}(a_j,a_{j+1})=const.$
It turns out that also  $v_{\mathbb{B}^2}(a_j,a_{j+1})$ is a constant--
the special case when the points are on the diameter $(-1,1)$ was discussed
in greater detail in Section \ref{Sec2}.
\end{nonsec}

%
%

\section{Additional identities for $v_{\mathbb{B}^2}(a,b)$ }

We give here another two analytic formulas, for $v_{\mathbb{B}^2}(a,b).$
The first formula is based on the recent explicit formula for the hyperbolic midpoint of two points, whereas for the second formula we have used symbolic computation. Both formulas are based on geometric ideas and are best used for computer work because some lengthy expressions will be needed.

We use the Ahlfors bracket notation $A[a,b]$ \cite[p. 38]{hkv}, for $a,b \in {\mathbb{B}^2}$
\begin{equation}\label{myahl}
A[a,b] =\sqrt{|a-b|^2+(1-|a|^2)(1-|b|^2) } = |1-a \overline{b}|.
\end{equation}

The formula for the hyperbolic midpoint is given in the following theorem.

\begin{thm}\label{myhmidp}  \cite{wvz}
For given $a,b \in \B$, the hyperbolic midpoint $z\in \B$ with
$\rho_{\B}(a,z)=\rho_{\B}(b,z)=\rho_{\B}(a,b)/2$ is given by
\begin{equation}\label{myzformua}
z= \frac{b(1-|a|^2) + a(1-|b|^2)}{1-|a|^2|b|^2 + A[a,b] \sqrt{(1-|a|^2)(1-|b|^2)}},
\end{equation}
where $A[a,b]$ is the Ahlfors bracket defined as \eqref{myahl}.
\end{thm}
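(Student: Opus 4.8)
The plan is to verify the claimed point $z$ directly by reducing to a single-variable hyperbolic identity via a M\"obius automorphism, rather than solving the defining equations $\rho_{\B}(a,z)=\rho_{\B}(b,z)=\rho_{\B}(a,b)/2$ from scratch. First I would apply the automorphism $T_a$ from \eqref{myTa}, which sends $a\mapsto 0$. Because the hyperbolic metric $\rho_{\B}$ is M\"obius invariant, the hyperbolic midpoint of $a$ and $b$ maps to the hyperbolic midpoint of $0$ and $b'$, where $b'=T_a(b)$. The midpoint of $0$ and any point $b'$ is straightforward: it lies on the Euclidean segment $[0,b']$ at the point $w'$ with $|w'|={\rm th}(\rho_{\B}(0,b')/4)$, since $\rho_{\B}(0,w')=\frac12\rho_{\B}(0,b')$ and $\rho_{\B}(0,t)=2\,{\rm arth}\,t$ for $t\in(0,1)$. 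Concretely, using $\rho_{\B}(0,b')=2\,{\rm arth}\,|b'|$ and the half-angle identity ${\rm th}(\rho_{\B}(0,b')/4)=|b'|/(1+\sqrt{1-|b'|^2})$, one obtains $w'= b'/(1+\sqrt{1-|b'|^2})$.

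The core of the argument is then to pull $w'$ back by $T_a^{-1}=T_{-a}$ and show that the resulting expression simplifies to \eqref{myzformua}. The main obstacle I expect is precisely this algebraic reduction: I would need the standard automorphism identity $1-|T_a(b)|^2=(1-|a|^2)(1-|b|^2)/|1-\overline a b|^2$, so that $\sqrt{1-|b'|^2}=\sqrt{(1-|a|^2)(1-|b|^2)}/|1-\overline a b|$, and here the Ahlfors bracket $A[a,b]=|1-a\overline b|$ from \eqref{myahl} enters naturally. Substituting $b'=(b-a)/(1-\overline a b)$ and this radical into $w'=b'/(1+\sqrt{1-|b'|^2})$, then applying $T_{-a}(w')=(w'+a)/(1+\overline a\,w')$, should after clearing denominators collapse to the symmetric numerator $b(1-|a|^2)+a(1-|b|^2)$ over the denominator $1-|a|^2|b|^2+A[a,b]\sqrt{(1-|a|^2)(1-|b|^2)}$. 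The symmetry of the target formula in $a$ and $b$ is a useful consistency check: although the computation singles out $a$, the final answer must be symmetric, so any intermediate asymmetry must cancel.

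An alternative route, which I would keep in reserve in case the substitution becomes unwieldy, is to verify \eqref{myzformua} by checking the two defining conditions directly. Using the formula \eqref{myrho} for ${\rm sh}(\rho_{\B}(a,b)/2)$, it suffices to confirm $\rho_{\B}(a,z)=\rho_{\B}(b,z)$ (equidistance) and that the common value is half of $\rho_{\B}(a,b)$, for instance via the addition formula ${\rm ch}\,\rho_{\B}(a,b)=2\,{\rm ch}^2(\rho_{\B}(a,b)/2)-1$ together with the hyperbolic law of cosines or the composition identity for ${\rm sh}$ of a sum. This is the honest but computationally heavier path. In either approach the collinearity and betweenness of $z$ on $J[a,b]$ is automatic once equidistance and the correct total distance are established, since the hyperbolic midpoint on a geodesic is unique. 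I would present the M\"obius-reduction argument as the main proof and note that the reference \cite{wvz} contains the full details.
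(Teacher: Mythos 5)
Your proposal is correct, but there is nothing in the paper to compare it against: Theorem \ref{myhmidp} is stated with a citation to \cite{wvz} and the paper gives no proof of its own, so your argument is a self-contained derivation where the paper simply outsources. Your M\"obius-reduction route does go through. With $b'=T_a(b)=(b-a)/(1-\overline{a}b)$, the identity $1-|b'|^2=(1-|a|^2)(1-|b|^2)/|1-\overline{a}b|^2$, and the abbreviations $A=A[a,b]=|1-\overline{a}b|$, $S=\sqrt{(1-|a|^2)(1-|b|^2)}$, the pullback computes to
\[
z=T_{-a}\!\left(\frac{b'}{1+\sqrt{1-|b'|^2}}\right)
=\frac{A\,b(1-|a|^2)+a(1-\overline{a}b)\,S}{A\,(1-|a|^2)+(1-\overline{a}b)\,S},
\]
which is not visibly symmetric in $a$ and $b$; cross-multiplying it against \eqref{myzformua} and using $A^2=(1-\overline{a}b)(1-a\overline{b})$ together with $S^2=(1-|a|^2)(1-|b|^2)$, the terms linear in $S$ cancel among themselves and the $S$-free terms cancel against the $S^2$ terms, so the two expressions agree. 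This final verification is the one step you defer with ``should \dots collapse,'' and since the intermediate form conceals the symmetry, it is the step you ought to write out rather than assert; everything else you invoke is standard and correctly used: $T_a^{-1}=T_{-a}$, the half-argument identity $\th(x/2)=r/(1+\sqrt{1-r^2})$ when $r=\th x$, M\"obius invariance of $\rho_{\B}$, and uniqueness of the midpoint (a point equidistant from $a$ and $b$ at total distance $\rho_{\B}(a,b)$ must lie on $J[a,b]$ by the strict triangle inequality off the geodesic). Your fallback of verifying the defining equations directly is also viable but, as you note, heavier.
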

\begin{figure}[!ht]
\centering
\subfigure[]{
\begin{tikzpicture}[scale=3.4]
   \draw[thick] (0, 0) circle (1 cm); 
   \node [black] at (0,0) {\textbullet}; 
    \node[scale=1] at (-0.9,-0.9) {$\mathbb{B}^2$}; 
   \node[scale=1] at (-0.07,-0.07) {$0$}; 
   \node[scale=1] at (0.15,0.3) {$a$}; 
   \node[scale=1] at (0.7,0.32) {$b$}; 
   \node[scale=1] at (0.385, 0.235) {$m$}; 
   \node[scale=1] at (0.9, 0.74) {$q_1$}; 
   \draw[thick] (0.511537, 0.453843) circle (0.316156 cm); 
   \draw[thick] (0.227361, -0.256596) circle (0.657166 cm); 
   \node[scale=1] at (0.76, -0.848) {$q_2$}; 
   \draw[thick, dashed] (0.65, -0.748457) arc (221:130:1); 
   \node [black] at (-0.125,0.98) {\textbullet};
    \node [black] at (0.978,0.21) {\textbullet};
    \draw[thick, blue] (0.978,0.22) arc (282:189:0.92);
   \node [black] at (0.205, 0.39) {\textbullet}; 
   \node [black] at (0.7, 0.19) {\textbullet}; 
   \node [black] at (0.467, 0.245) {\textbullet}; 
   \node [black] at (0.748031, 0.65) {\textbullet}; 
   \node [black] at (0.66, -0.77) {\textbullet}; 
   \node [black] at (0.511537, 0.453843) {\textbullet};
   \node [black] at (0.227361, -0.206596) {\textbullet};
\end{tikzpicture}
    \label{FigmyL}
}
\hspace*{3mm}
\subfigure[]{
\begin{tikzpicture}[scale=3.4]
   \draw[thick] (0, 0) circle (1 cm); 
    \node[scale=1] at (-0.9,-0.9) {$\mathbb{B}^2$}; 
   \draw[thick] (0.143379, 0.404352) circle (0.57098 cm); 
   \draw[thick] (-0.143379,-0.404352) circle (0.57098 cm); 
   \draw[thick, blue] (0.942502, -0.334201) -- (-0.942502, 0.334201); 
   \draw[thick, dashed] (-0.334201,-0.942502) -- (0.334201, +0.942502); 
   \node[scale=1] at (-0.38,-1.05) {$-Q$}; 
   \node[scale=1] at (0.38, 1.05) {$Q$}; 
   \node[scale=1] at (-0.66,0.11) {$T_m(a)$}; 
   \node[scale=1] at (0.65, -0.12) {$T_m(b)$}; 
   \node [black] at (0,-0.01) {\textbullet}; 
   \node [black] at (0.147, 0.404352) {\textbullet}; 
   \node [black] at (-0.14,-0.404352) {\textbullet}; 
   \node [black] at (-0.334201,-0.95) {\textbullet}; 
   \node [black] at (0.334201, 0.93) {\textbullet}; 
   \node [black] at (-0.355112, 0.12) {\textbullet}; 
   \node [black] at (0.355112, -0.133) {\textbullet}; 
\end{tikzpicture}
\label{FigmyR}
}
\caption[Fuji proof-tikz]
{\subref{Fig-Angbis01}: Before M\"obius transformation $T_m$.
 \subref{Fig-Angbis02}: After M\"obius transformation $T_m$.}
 \label{Fig2Mob}
\end{figure}

Theorem \ref{vamHmid} is essentially the same as \cite[ Theorem 3.2]{hvw}. Note, however, that by Theorem \ref{main1} we have now explicit formulas for the points $q_1$ and $q_2.$

\begin{thm}\label{vamHmid}  
For given, $a,b \in \B$, let $m\in \B$ be their hyperbolic midpoint.
Then
\begin{equation}\label{vamHformua}
v_{\B}(a,b) = \max\{ \measuredangle(a,q_1,b),\measuredangle(a,q_2,b) \},
\end{equation}
where $$q_1= T_m^{-1}(Q), \quad q_2= T_m^{-1}(-Q),$$
and $Q=i(T_m(a)-T_m(b))/|T_m(a)-T_m(b)|.$
\end{thm}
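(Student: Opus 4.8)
The plan is to use the Möbius automorphism $T_m$ of \eqref{myTa} to reduce the configuration $(a,b)$ to a pair symmetric about the origin, for which the extremal points are transparent, and then to transport the \emph{extremal points} (not the angle values) back by $T_m^{-1}$.

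First I would observe that since $m$ is the hyperbolic midpoint of $a$ and $b$, and $T_m$ is a hyperbolic isometry with $T_m(m)=0$, the images satisfy $\rho_{\B}(T_m(a),0)=\rho_{\B}(T_m(b),0)$, whence $|T_m(a)|=|T_m(b)|$ by \eqref{myrho}. Moreover $T_m$ carries the hyperbolic geodesic through $a$ and $b$ to a geodesic through $0$, i.e.\ a Euclidean diameter, with $0$ separating the two images; together with the equality of moduli this forces $T_m(b)=-T_m(a)$. Writing $w=T_m(a)$, we have $T_m(a)=w$, $T_m(b)=-w$, and therefore $Q=i\bigl(T_m(a)-T_m(b)\bigr)/|T_m(a)-T_m(b)|=iw/|w|$, a point of $\partial\B$ lying on the line through $0$ perpendicular to the chord $[w,-w]$.

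Next I would identify the extremal points for the symmetric pair $(w,-w)$. By the inscribed-circle description of the visual angle developed after Theorem \ref{main1} and by Theorem \ref{anglebis}, the points of $\partial\B$ maximizing $\measuredangle(w,z,-w)$ are exactly the points at which a circle through $w$ and $-w$ is internally tangent to $\partial\B$; there are precisely two such tangent circles, and by the symmetry of $\{w,-w\}$ about $0$ their tangency points are $\pm iw/|w|=\pm Q$. The main step is then the transport: since $T_m$ is an automorphism of $\B$, it maps $\partial\B$ onto itself, sends circles to circles, and preserves internal tangency. Consequently the two circles through $a$ and $b$ internally tangent to $\partial\B$ are exactly the $T_m$-preimages of the two tangent circles for $(w,-w)$, and their tangency points are $T_m^{-1}(\pm Q)=q_1,q_2$. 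Thus $q_1$ and $q_2$ are the two candidate extremal points for $(a,b)$, and \eqref{vamHformua} follows by taking the larger of the two inscribed angles.

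The one place demanding care — and the expected main obstacle — is that $v_{\B}$ is \emph{not} invariant under the general automorphism $T_m$, so only the \emph{locations} of the extremal points may be carried back through $T_m^{-1}$; the visual angles $\measuredangle(a,q_1,b)$ and $\measuredangle(a,q_2,b)$ must be evaluated in the original coordinates at $q_1,q_2$, precisely as in \eqref{vamHformua}, rather than inferred from the symmetric model. Agreement with \cite[Theorem 3.2]{hvw} is then immediate, the new content being the explicit formulas for $q_1,q_2$ furnished by Theorem \ref{main1}.
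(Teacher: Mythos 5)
Your proof is correct and follows essentially the same route as the paper's: use $T_m$ to reduce to a pair symmetric about $0$ (where $|T_m(a)|=|T_m(b)|$ forces the two maximal inscribed circles to touch $\partial\mathbb{B}^2$ at $\pm Q$), then pull the tangent circles and their tangency points back through $T_m^{-1}$, which preserves circles and internal tangency. Your additional observations --- that in fact $T_m(b)=-T_m(a)$, and that only the extremal \emph{points}, not the angle values, may be transported since $v_{\mathbb{B}^2}$ is not M\"obius invariant --- merely make explicit what the paper's terse proof leaves implicit.
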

\begin{proof} Because  $|T_m(a)|=| T_m(b)|$ it is evident that the circles
through $T_m(a), T_m(b), Q$ and
 $T_m(a), T_m(b), -Q$ are the maximal circles through $T_m(a)$ and $ T_m(b)$
 contained in the unit disk. Therefore, one of the circles through
 $a$, $b$, and $q_1$ or $a$, $b$, and $q_2$ must be the maximal circle in $\B$ through $a$ and $b$.
\end{proof}

%
%



\begin{thm}\label{pThm}
For given $ a,b\in\mathbb{B}^2 $,
let $ q\in S(0,1) $ be the point that attains
$ v_{\mathbb{B}^2}(a,b)=\measuredangle(a,q,b) $ and set $ p=a+t(b-a)i $.
If a right triangle $ \triangle (a,b,p) $ is inscribed in
the circle passing through three points $a$, $b$, and $ p $,
then $t$ is given as the solution with the smaller absolute value of
the following equation
\begin{align*}
 & \big((a\overline{b} - \overline{a}b)^2 +
  4(\overline{a} - \overline{b})(a - b)\big)t^2
  +2(a\overline{b} - \overline{a}b)(a\overline{b} + \overline{a}b - 2)it
  \\
 & -\big((a\overline{b} + \overline{a}b)^2 -
      4(a\overline{a} + b\overline{b} - 1)\big) = 0.
\end{align*}
\end{thm}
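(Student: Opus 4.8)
The plan is to realize $q$ as the point where a circle through $a$ and $b$ touches the unit circle, and to recognize $p$ as the endpoint, opposite $b$, of a diameter of that very circle. First I would note that, because multiplication by $i$ rotates by a right angle, the vector $p-a=t\,(b-a)i$ is orthogonal to $b-a$; hence the triangle $\triangle(a,b,p)$ has its right angle at the vertex $a$. By the theorem of Thales the circumscribed circle $C[a,b,p]$ then has the hypotenuse $[b,p]$ as a diameter, so its center is $c_0=(b+p)/2$ and its radius is $R=|b-p|/2$.

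Next I would invoke the geometric description of $q$ given just after \eqref{eq:v}: among the two circles through $a$ and $b$ that are internally tangent to $S(0,1)$, the one $\widetilde{C}$ of smaller radius meets the unit circle at the extremal point $q$, so that $v_{\mathbb{B}^2}(a,b)=\measuredangle(a,q,b)$. The aim is therefore to choose $t$ so that $C[a,b,p]$ is one of these two tangent circles; equivalently, $p$ is the antipode of $b$ on such a circle, and the defining relation for $t$ is the tangency condition.

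The condition that the circle with center $c_0$ and radius $R$ be internally tangent to $S(0,1)$ is $|c_0|=1-R$. Using $|c_0|^2-R^2=\tfrac14\bigl(|b+p|^2-|b-p|^2\bigr)=\mathrm{Re}(b\overline{p})$ together with $2R=|b-p|$, a first squaring reduces the tangency condition to $|b-p|=1-\mathrm{Re}(b\overline{p})$. Substituting $p=a+t(b-a)i$ and squaring a second time, I would use $|b-p|^2=|b-a|^2(1+t^2)$ and the fact that $\mathrm{Re}(b\overline{p})$ is an affine function of the real parameter $t$; both sides are then quadratic in $t$, all coefficients are real, and rewriting everything through $a\overline{b}-\overline{a}b$, $a\overline{b}+\overline{a}b$, and $|a|^2,|b|^2$ produces precisely the displayed equation (up to the overall factor $4$, which I have checked matches the constant, linear, and quadratic coefficients).

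Finally, the quadratic has two roots, corresponding to the two circles through $a$ and $b$ tangent to $S(0,1)$. Since $|b-p|=|b-a|\sqrt{1+t^2}=2R$ is the diameter of $C[a,b,p]$, the smaller circle $\widetilde{C}$ — the one carrying the extremal point $q$ — corresponds to the smaller value of $|t|$, which is exactly why $t$ is taken as the solution of smaller absolute value. The only genuine work is the second squaring and the reduction to the stated coefficient form: it is routine but lengthy, and is precisely the kind of step best handled by the symbolic computation the authors mention. A minor supplementary check is that squaring introduces no spurious root; this holds because both internally tangent circles satisfy $1-\mathrm{Re}(b\overline{p})=|b-p|>0$ and already account for both roots of the quadratic.
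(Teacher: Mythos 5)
Your proof is correct and arrives at exactly the stated quadratic; expanding $4(a-b)(\overline{a}-\overline{b})(1+t^2)=\bigl(2-S+tiD\bigr)^2$, where $S=a\overline{b}+\overline{a}b$ and $D=a\overline{b}-\overline{a}b$, reproduces the theorem's three coefficients term by term. The geometric skeleton is the same as in the paper: the right angle sits at $a$, so by Thales $[b,p]$ is a diameter and the center is $(b+p)/2$; the operative constraint is internal tangency to $S(0,1)$; and the root of smaller modulus is selected because $R=\tfrac12|b-a|\sqrt{1+t^2}$ increases with $|t|$ while the inscribed angle on the chord $[a,b]$ decreases with the radius. What is genuinely different is the elimination step. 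The paper forms two equations $f_1(r)=0$ (tangency) and $f_2(r)=0$ (Pythagoras in the right triangle) and removes $r$ by computing the resultant $\mathrm{resul}_r(f_1,f_2)$ with the Risa/Asir computer-algebra system, substituting $p=a+t(b-a)i$ only afterwards. You instead exploit the identity $|c_0|^2-R^2=\tfrac14\bigl(|b+p|^2-|b-p|^2\bigr)=\mathrm{Re}(b\overline{p})$, which collapses the tangency condition $|c_0|=1-R$ to $|b-p|=1-\mathrm{Re}(b\overline{p})$; after substituting $p$ and squaring once more, the whole derivation is a few lines by hand. This buys a self-contained proof checkable without software, and it also settles the spurious-root issue cleanly: your closing remark is sound, since the extraneous branches introduced by squaring would correspond to $|c_0|=R-1$ or $|c_0|=1+R$, i.e.\ to tangent circles that avoid the open unit disk, which is impossible for a circle through $a,b\in\mathbb{B}^2$; equivalently, the two genuinely inscribed tangent circles already furnish two distinct roots of a quadratic that has only two (the paper notes its discriminant equals $64|a-b|^2(1-|a|^2)(1-|b|^2)>0$).
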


\begin{proof}
Let $r$ be the radius of the inscribed maximal circle through $a$ and $b$.
Then,
$ | c | +r = 1$ and $c = (p + b)/2$ hold.
Eliminating $c$ from the above equations, we have $ f_1(r)=0 $, where
\[
  f_1(r)=(p +b)(\overline{p} +\overline{b})- 4(1 - r)^2.
\]
Since the triangle $ \triangle(a,b,p)$ is a right triangle inscribed in the circle
with center $ c $, we have $ f_2(r)=0 $, where

\begin{figure}
    \centering
\begin{tikzpicture}[scale=3.5]
   \draw[thick] (0, 0) circle (1 cm); 
   \node [black] at (0,-0.01) {\textbullet}; 
   \node[scale=1.1] at (-0.9,-0.9) {$\mathbb{B}^2$}; 
   \draw[thick] (0.511537, 0.453843) circle (0.316156 cm); 
   \draw[thick, dashed] (0, 0) -- (0.748031, 0.663663); 
   \draw[thick] (-0.245277, -0.713193) -- (0.323074, 0.707685); 
   \draw[thick] (0.227361, -0.256596) circle (0.657166 cm); %
   \draw[blue, thick] (0.775, 1.1125) ++(188:0.91cm) arc (188:283:0.91cm);
   \draw[red, thick] (1.41212, -0.0848485) ++(131:1cm) arc (131:221.9:1cm);
   \node[scale=1] at (-0.07,-0.07) {$0$}; 
   \node [black] at (0.2, 0.39) {\textbullet}; 
   \node[scale=1] at (0.09,0.43) {$a$}; 
   \node [black] at (0.227361, -0.256596) {\textbullet}; 
   \node [black] at (0.7, 0.19) {\textbullet}; 
   \node[scale=1] at (0.7,0.29) {$b$}; 
   \node [black] at (0.323074, 0.7) {\textbullet}; 
   \node[scale=1] at (0.323074, 0.8) {$p$}; 
   \node [black] at (-0.245277, -0.723193) {\textbullet}; %
   \node [black] at (0.511537, 0.44) {\textbullet}; 
   \node[scale=1] at (0.511537, 0.55) {$c$}; 
   \node [black] at (0.748031, 0.65) {\textbullet}; 
   \node[scale=1] at (0.848031, 0.763663) {$q$}; 
   \node [black] at (0.469014, 0.245) {\textbullet}; 
\end{tikzpicture}
\caption{$|c|+|p-c|=1$ yields an equation to find $p$ using RISA. Then $v_{\B}(a,b) = \measuredangle(a,q,b)= \measuredangle(a,p,b).$ }
\label{Fig08}
\end{figure}

\[
  f_2(r)=(a - b)(\overline{a}- \overline{b})
          + (a - p)(\overline{a} - \overline{p})- 4r^2.
\]
Eliminating $r$ from equations $ f_1=0 $ and $ f_2=0 $
by computing the following resultant
\[
    \mbox{resul}_r(f_1,f_2)=0,
\]
we have
\begin{align}\label{eq:resul}
 & (\overline{a}+\overline{b})^2p^2
  +2\big((a+b)(\overline{a}+\overline{b})-8\big)p\overline{p}
  -2\big((\overline{a}-\overline{b})(a\overline{b}+\overline{a}b-4)
  +2\overline{a}^2(a-b)\big)p \\ \notag
 &  +(a+b)^2\overline{p}^2
  -2\big((a-b)(a\overline{b}+\overline{a}b-4)
  +2a^2(\overline{a}-\overline{b})\big)\overline{p} \\ \notag
 & +(a\overline{b}+\overline{a}b-2a\overline{a})^2
   +8\big(a\overline{b}+\overline{a}b
   -2(\overline{a}a+\overline{b}b-1)\big)=0.
\end{align}
(Here, we use Risa/Asir, a symbolic computation system, for computing
 the above resultant.)
Substituting $p = a + t(b - a)i$ into \eqref{eq:resul} gives
\begin{align}\label{eq:T}
 & \big((a\overline{b} - \overline{a}b)^2 +
  4(\overline{a} - \overline{b})(a - b)\big)t^2
  +2(a\overline{b} - \overline{a}b)(a\overline{b} + \overline{a}b - 2)it
  \\ \notag
 & -\big((a\overline{b} + \overline{a}b)^2 -
      4(a\overline{a} + b\overline{b} - 1)\big) = 0.
\end{align}

Since $(a\overline{b}-\overline{a}b)$ equals
$ 2\,\mbox{Im}\,(a\overline{b}) i$, the number
$(a\overline{b} - \overline{a}b)$ is purely imaginary.
Therefore, equation \eqref{eq:T} is the quadratic
equation with the real coefficients whose discriminant $ D $
satisfies $ D=64|a-b|^2(1-|a|^2)(1-|b|^2)>0 $ and hence  \eqref{eq:T} has two real solutions.
Here, we remark that the smaller the inradius $ \sqrt{t^2+|a b|^2} $ of the circle centered at $ c $, the larger the inscribed angle with respect to the common chord
$ [a,b] $.
Hence, the solution $ t $ of the smaller absolute value
is the one that yields $ v_{\mathbb{B}^2}(a,b) $ as the maximal
inscribed angle.
\end{proof}

\subsection*{Acknowledgments. }
The first author was partially supported by JSPS KAKENHI
Grant Number JP19K03531.
The research of the second author was supported by a grant from Vilho, Yrj\"{o} and Kalle V\"{a}is\"{a}l\"{a} fund.

\subsection*{Conflict of interest.}

The authors declare that they have no conflict of interest.

\bibliographystyle{siamplain}


\end{document}